\documentclass[11pt]{article}
\usepackage{graphicx,amsmath,amsfonts,amssymb,bbm,amsthm}
\usepackage{psfrag}
\usepackage{epsf}

\headheight=8.2pt \topmargin=0pt \textheight=625pt
\textwidth=460pt
\oddsidemargin=10pt \evensidemargin=10pt

\makeatletter\@addtoreset {equation}{section}\makeatother

\theoremstyle{plain}
\newtheorem{theorem}{Theorem}[section]
\newtheorem{lemma}[theorem]{Lemma}

\newtheorem{proposition}[theorem]{Proposition}
\newtheorem{corollary}[theorem]{Corollary}
\theoremstyle{remark}
\newtheorem{remark}[theorem]{Remark}

{\hspace*{\fill}$\rule{.3\baselineskip}{.35\baselineskip}$\end{trivlist}}

\newcommand{\R}{\mathbb{R}}

\newcommand{\Z}{\mathbb{Z}}

\renewcommand{\phi}{\varphi}

\newcommand{\TS}{\textstyle}
\newcommand{\dd}{\,\mathrm{d}}
\newcommand{\half}{\textstyle\frac12}
\newcommand{\MM}{\mathcal{M}}

\begin{document}

\title{\bf Orbital stability in the cubic defocusing NLS equation:
II. The black soliton}

\author{Thierry Gallay$^{1}$ and Dmitry Pelinovsky$^{2}$ \\
{\small $^{1}$ Institut Fourier, Universit\'e de Grenoble 1,
38402 Saint-Martin-d'H\`eres, France} \\
{\small $^{2}$ Department of Mathematics, McMaster
University, Hamilton, Ontario, Canada, L8S 4K1}  }

\date{\today}
\maketitle

\begin{abstract}
Combining the usual energy functional with a higher-order conserved
quantity originating from integrability theory, we show that the black
soliton is a local minimizer of a quantity that is conserved along the
flow of the cubic defocusing NLS equation in one space dimension. This
unconstrained variational characterization gives an elementary proof
of the orbital stability of the black soliton with respect to
perturbations in $H^2(\R)$.
\end{abstract}

\section{Introduction}
\label{sec:intro}

In this work we show how the techniques developed in the companion
paper \cite{GP} to investigate the stability properties of the cnoidal
periodic waves of the cubic defocusing nonlinear Schr\"odinger
equation in one space dimension can be extended to provide a new and
rather elementary proof of orbital stability in the limiting case of
the black soliton. We thus consider the cubic defocusing NLS equation
\begin{equation}
\label{nls}
  i \psi_t(x,t) + \psi_{xx}(x,t) - |\psi(x,t)|^2 \psi(x,t) \,=\, 0,
\end{equation}
where $\psi$ is a complex-valued function of $(x,t) \in \R \times
\R$. The black soliton is the particular solution of \eqref{nls}
given by $\psi(x,t) = e^{-it}u_0(x)$, where
\begin{equation}
\label{black-soliton}
  u_0(x) \,=\, \tanh\Bigl(\frac{x}{\sqrt{2}}\Bigr), \qquad x \in \R.
\end{equation}
For later use, we note that the soliton profile $u_0 : \R \to \R$
satisfies the differential equations
\begin{equation}
\label{wave}
  u_0' \,=\, \frac{1}{\sqrt{2}}\Bigl(1-u_0^2\Bigr), \qquad
  \hbox{hence}\qquad u_0'' + u_0 - u_0^3 \,=\, 0.
\end{equation}

The NLS equation \eqref{nls} has many symmetries and conserved
quantities, which play a crucial role in the dynamics of the system.
In particular, the gauge invariance $\psi \mapsto e^{i\theta}\psi$ and
the translation invariance $\psi \mapsto \psi(\cdot-\xi)$ give
rise to the conservation of the charge $Q$ and the momentum $M$,
respectively, where
\begin{equation}
\label{QMdef}
  Q(\psi) \,=\, \int_\R \Bigl(|\psi|^2 -1\Bigr)\dd x, \qquad
  M(\psi) \,=\, \frac{i}{2} \int_\R \Bigl(\bar{\psi} \psi_x -
  \psi \bar{\psi}_x\Bigr) \dd x.
\end{equation}
Since the NLS equation \eqref{nls} is an autonomous Hamiltonian
system, we also have the conservation of the energy
\begin{equation}
\label{energy}
  E(\psi) \,=\, \int_\R \left(|\psi_x|^2 + \frac{1}{2} (1 - |\psi|^2)^2
  \right) \dd x.
\end{equation}
In what follows, our goal is to study the stability of the black
soliton \eqref{black-soliton}, and we shall therefore restrict
ourselves to solutions of \eqref{nls} for which $|\psi| \to 1$ as $|x|
\to \infty$. This is why we defined the conserved quantities
\eqref{QMdef}, \eqref{energy} in such a way that the integrands vanish
when $|\psi| = 1$ and $\psi_x = 0$.

The nonlinear stability of the black soliton \eqref{black-soliton} has
been studied in several recent works. In \cite{BGSS} the authors apply
the variational method of Cazenave and Lions \cite{CL}, which relies
on the fact that the black soliton \eqref{black-soliton} is a global
minimizer of the energy $E$ for a fixed value of the momentum $M$. The
difficulty with this approach is that the momentum is not defined for
all finite-energy solutions, so that the integral defining $M$ in
\eqref{QMdef} has to be renormalized and properly interpreted. A
slightly different proof was subsequently given in \cite{GS}, in the
spirit of the work by Weinstein \cite{We} and Grillakis, Shatah, and
Strauss \cite{GSS}. The main idea is to show that the energy
functional \eqref{energy} becomes coercive in a neighborhood of the
black soliton \eqref{black-soliton} if the conservation of the
momentum is used to get rid of one unstable direction. Both results in
\cite{BGSS,GS} are variational in nature and establish orbital
stability of the black soliton in the energy space. Note that
asymptotic stability of the black soliton is also proved in \cite{GS},
using ideas and techniques developed by Martel and Merle for the
generalized Korteweg-de Vries equation \cite{MM}. In a different direction,
a more precise orbital stability result was obtained in \cite{GZ}
for sufficiently smooth and localized perturbations, using the inverse
scattering transform method which relies on the integrability of the
cubic defocusing NLS equation \eqref{nls}. Similarly, 
asymptotic stability of the black soliton and several dark solitons 
was recently proved in \cite{Cuccagna-private}.

As as consequence of integrability, the NLS equation \eqref{nls}
has many conserved quantities in addition to the charge, the
momentum, and the energy. In the present work, we introduce a
new variational approach based on the higher-order functional
\begin{equation}
\label{Sdef}
  S(\psi) \,=\, \int_{\R} \left( |\psi_{xx}|^2 + 3 |\psi|^2
  |\psi_x|^2 + \frac{1}{2} (\bar{\psi} \psi_x + \psi \bar{\psi}_x)^2
  + (1 - |\psi|^2)^2 \Bigl( 1 + \frac{1}{2} |\psi|^2 \Bigr) \right)
  \dd x,
\end{equation}
which is also conserved under the evolution defined by \eqref{nls}.
The latter claim can be proved by a straightforward but cumbersome
calculation, or by more educated techniques as described, e.g., in
\cite[Section~2.3]{Yang}. The natural domain of definition for the
functional \eqref{Sdef} is the $H^2$ energy space defined by
\begin{equation}
\label{Xdef}
  X \,=\, \Bigl\{\psi \in H^2_{\rm loc}(\R)\,: \quad \psi_x \in
  H^1(\R), ~1 - |\psi|^2 \in L^2(\R) \Bigr\}.
\end{equation}
Indeed, if $\psi \in X$, then $\zeta := 1 - |\psi|$ belongs to
$H^1(\R)$, because $|\zeta| \le |1 - |\psi|^2| \in L^2(\R)$ and
$\zeta_x = -|\psi|_x \in L^2(\R)$. By Sobolev's embedding of $H^1(\R)$
into $L^{\infty}(\R)$, we thus have $|\psi| = 1 - \zeta \in
L^\infty(\R)$, and from the definitions (\ref{Sdef}) and (\ref{Xdef}),
it follows easily that $S(\psi) < \infty$. Since $u_0'$, $u_0''$, and
$1 - u_0^2$ decay exponentially to zero as $|x| \to \infty$, it is
clear that $u_0 + H^2(\R) \subset X$, so that the functional
\eqref{Sdef} is well defined for $H^2$ perturbations of the soliton
profile $u_0$. This allows us to define the differential of $S$ at
$u_0$, and a direct calculation using the differential equations
\eqref{wave} reveals that $u_0$ is a {\em critical point} of $S$, in
the sense that $S'(u_0) = 0$.

Unfortunately, the second variation $S''(u_0)$ has no
definite sign \cite{GP}, hence it is not possible to prove orbital
stability of the black soliton using the functional $S$ alone.
As is explained in the companion paper \cite{GP}, which is devoted
to the stability of periodic waves for the NLS equation \eqref{nls},
it is possible to cure that problem by subtracting from $S$ an
appropriate multiple of the energy $E$, which is well defined
on $X$ and also satisfies $E'(u_0) = 0$. The optimal choice is
\begin{equation}
\label{Lamdef}
  \Lambda(\psi) \,=\, S(\psi) - 2 E(\psi), \qquad \psi \in X.
\end{equation}
We then have $\Lambda'(u_0) = 0$, and the starting point of our
approach is the following result, which asserts that the second
variation $\Lambda''(u_0)$ is nonnegative.

\begin{proposition}
\label{prop-main}
The second variation of the functional \eqref{Lamdef} at the black
soliton \eqref{black-soliton} is nonnegative for perturbations
in $H^2(\R)$.
\end{proposition}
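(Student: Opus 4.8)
The plan is to diagonalize $\Lambda''(u_0)$ by setting $\psi = u_0 + v + iw$ with real $v,w\in H^2(\R)$. Because $u_0$ is real, no mixed $v$--$w$ terms survive at quadratic order, so the second variation splits as $\tfrac12\Lambda''(u_0)[\phi,\phi] = \mathcal{Q}_+(v) + \mathcal{Q}_-(w)$; within each sector the first-order cross terms (e.g.\ $\int_\R u_0u_0'(w^2)'\dd x$) are removed by integration by parts. Eliminating $u_0'$ and $u_0''$ through \eqref{wave}, this first, routine step should give
\[ \mathcal{Q}_+(v) = \int_\R\Bigl[(v'')^2 + (5u_0^2-2)(v')^2 + (9u_0^2-5u_0^4-2)v^2\Bigr]\dd x, \]
\[ \mathcal{Q}_-(w) = \int_\R\Bigl[(w'')^2 + (3u_0^2-2)(w')^2 + (1-u_0^2)w^2\Bigr]\dd x, \]
and everything reduces to proving $\mathcal{Q}_\pm\ge0$. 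The translation and gauge symmetries predict the null directions $v=u_0'$ and $w=u_0$, which give a useful check on the algebra.

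For $\mathcal{Q}_+$ I would complete the square with the first-order operator $D_+:=\partial_x+\sqrt2\,u_0 = u_0'\,\partial_x\,(u_0')^{-1}$, chosen to annihilate $u_0'$. The real energy Hessian factors as $L_+ := -\partial_x^2+3u_0^2-1 = D_+^*D_+$, and integrating the cross terms by parts (again via \eqref{wave}) should collapse $\mathcal{Q}_+$ into the transparent identity
\[ \mathcal{Q}_+(v) = \|D_+v\|_{L^2}^2 + \|(D_+v)'\|_{L^2}^2 = \langle D_+v,\,(1-\partial_x^2)D_+v\rangle. \]
Since $1-\partial_x^2\ge1>0$, this makes $\mathcal{Q}_+\ge0$ manifest, with equality iff $D_+v=0$, i.e.\ $v\in\mathrm{span}(u_0')$. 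This is the easy half: it encodes the fact that the translation mode is the ground state of $L_+$.

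The delicate half is $\mathcal{Q}_-$, for here the relevant Hessian $L_-:=-\partial_x^2+u_0^2-1 = -\partial_x^2-\mathrm{sech}^2(x/\sqrt2)$ has a negative eigenvalue, so no inequality $L_-\ge0$ is available --- repairing this is precisely why $2E$ was subtracted. The same strategy nonetheless suggests the factorization $\mathcal{Q}_- = \hat D^*(-\partial_x^2+V)\hat D$ with $\hat D:=\partial_x - u_0'/u_0 = u_0\,\partial_x\,u_0^{-1}$ (annihilating the gauge mode $u_0$) and $V:=u_0^2-1+u_0^{-2}$: with $\hat D$ fixed by the null direction, matching the $(w')^2$-coefficient forces this $V$, and one must then verify that the $w^2$-coefficient closes the identity. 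The gain is that $V=u_0^2-1+u_0^{-2}\ge1$ on $(0,1]$, whence $-\partial_x^2+V\ge1$ and
\[ \mathcal{Q}_-(w) = \|(\hat Dw)'\|_{L^2}^2 + \int_\R V\,(\hat Dw)^2\dd x \ \ge\ \|\hat Dw\|_{L^2}^2 \ \ge\ 0. \]

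The main obstacle is that $\hat D$ and $V$ are singular at $x=0$, where $u_0$ vanishes: if $w(0)\ne0$ then $\hat Dw\sim -w(0)/x$ and the right-hand side diverges, so the factorization is legitimate only on the codimension-one subspace $\{w\in H^2(\R):w(0)=0\}$, on which $\hat Dw$ stays bounded. This already yields strict positivity $\mathcal{Q}_-(w)\ge\|\hat Dw\|_{L^2}^2>0$ there, so $\mathcal{Q}_-$ carries at most one negative eigenvalue, and the crux is to exclude it. I would do so by exploiting that the formal null mode $u_0$ itself vanishes at the origin (hence is consistent with this subspace) and is a threshold resonance at the bottom $0$ of the essential spectrum $[0,\infty)$ of $\mathcal{Q}_-$; together with the strict positivity on $\{w(0)=0\}$, a continuity argument along a homotopy of the potential, or a direct study of the explicit operator $-\partial_x^2+V$, should show that the single candidate negative eigenvalue is absent, giving $\mathcal{Q}_-\ge0$ on all of $H^2(\R)$ and completing the proof that $\Lambda''(u_0)\ge0$.
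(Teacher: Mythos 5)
Your treatment of $\mathcal{Q}_+$ is correct and coincides with the paper's own proof: the substitution $w=u_x+\sqrt2\,u_0u$ and the identity $\langle K_+u,u\rangle_{L^2}=\|w\|_{L^2}^2+\|w_x\|_{L^2}^2$ are exactly Lemma~\ref{lemma-K-plus}, equation \eqref{operator-K-plus-soliton}. The genuine gap is in the $\mathcal{Q}_-$ half, which, as you say, is the delicate one. Your singular factorization is in fact algebraically consistent (it is equivalent, after integration by parts, to the paper's identity), but it only yields nonnegativity on the codimension-one subspace $\{w\in H^2(\R):\,w(0)=0\}$, and the decisive step --- ruling out the single possible negative eigenvalue --- is left to an unspecified threshold-resonance/homotopy argument. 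That step is not routine, and the structural facts you propose to use cannot suffice, for the following concrete reason: the second-order operator $L_-$ itself has \emph{every one} of those properties. Indeed $L_-=\hat D^*\hat D$ formally, so $\langle L_-w,w\rangle_{L^2}=\|\hat Dw\|_{L^2}^2\ge 0$ on the same subspace $\{w(0)=0\}$; its essential spectrum is $[0,\infty)$; and $u_0$ is a bounded zero-energy solution (threshold resonance) vanishing at the origin. Yet $L_-$ \emph{does} have a negative eigenvalue, as you note yourself (in one dimension the attractive potential $u_0^2-1=-\mathrm{sech}^2(x/\sqrt2)$ always binds a state). So ``positivity on $\{w(0)=0\}$ plus a zero resonance'' is fully compatible with the presence of a negative eigenvalue; whatever excludes it for $K_-$ must exploit the fourth-order structure of $K_-$ in an essential way, and that argument is missing. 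A homotopy of the potential faces the same obstruction (one must exclude an eigenvalue crossing zero along the path, which is equivalent to the original problem), and the bound $-\partial_x^2+V\ge 1$ is not at issue --- the problem is transferring it through the conjugation by $\hat D$, which is singular precisely on the complement of your subspace.

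What the paper does instead avoids spectral counting altogether: Lemma~\ref{lemma-K-minus} proves the global, non-singular sum-of-squares identity \eqref{operator-K-minus}, namely $\langle K_-v,v\rangle_{L^2}=\|L_-v\|_{L^2}^2+\|u_0v_x-u_0'v\|_{L^2}^2$, valid for \emph{every} $v\in H^2(\R)$, so that nonnegativity is immediate. This is exactly a desingularized version of what you are attempting: $u_0v_x-u_0'v=u_0\,(\hat Dv)$, so the extra factor $u_0$ cancels the $1/u_0$ singularity at the origin, while the other square keeps the second-order expression $L_-v$ intact rather than factorizing it. The algebraic fact that these two particular squares sum exactly to $\langle K_-v,v\rangle_{L^2}$ (via the identities \eqref{wave}) is the real content of the paper's proof of Proposition~\ref{prop-main}; without it, or an equivalent identity valid on all of $H^2(\R)$, your argument for $\mathcal{Q}_-\ge 0$ is incomplete.
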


It is important to realize that Proposition~\ref{prop-main} gives an
{\em unconstrained} variational characterization of the black soliton
$u_0$, which is our main motivation for introducing the higher-order
conserved quantity \eqref{Sdef}. In contrast, the approach in
\cite{BGSS,GS} relies on the fact that $u_0$ is a minimum of the
energy $E(\psi)$ subject to the constraint $\MM(\psi) = \MM(u_0)$,
where $\MM$ is a suitably renormalized version of the momentum $M$ defined
in \eqref{QMdef}. 

The proof of Proposition~\ref{prop-main} developed in Section
\ref{sec:positive} actually shows that the second variation
$\Lambda''(u_0)$ is positive except for degeneracies due to
symmetries: the nonnegative self-adjoint operator associated with
$\Lambda''(u_0)$ has a simple zero eigenvalue which is due to
translation invariance, and the essential spectrum extends all the way
to the origin due to gauge invariance.  As a consequence,
perturbations in $H^2(\R)$ can include slow modulations of the phase
of the black soliton far away from the origin, which hardly increase
the functional $\Lambda$. This means that the second variation
$\Lambda''(u_0)$ is not coercive in $H^2(\R)$, even if modulation
parameters are used to remove the zero modes due to the
symmetries. For that reason, we are not able to control the
perturbations of the black soliton in the topology of $H^2(\R)$, but
only in a weaker sense that allows for a slow drift of the phase at
infinity, see Section~\ref{sec:modulation} below for a more detailed
discussion.

To formulate our main result, we equip the space $X$ with the distance
\begin{equation}
\label{distance}
  d_R(\psi_1,\psi_2) \,=\, \|(\psi_1 - \psi_2)_x\|_{H^1(\R)}
  + \| |\psi_1|^2 - |\psi_2|^2\|_{L^2(\R)} + \| \psi_1-\psi_2\|_{L^2(-R,R)},
\end{equation}
where $R \ge 1$ is a parameter. Note that $d_R$ is the exact analogue,
at the $H^2$ level, of the distance that is used in previous
variational studies of the black soliton, including
\cite{BGSS,Gerard,GS}. As is easily verified, a function $\psi \in
H^2_{\rm loc}(\R)$ belongs to $X$ if and only if $d_R(\psi,u_0) <
\infty$; moreover, different choices of $R$ give equivalent distances
on $X$. To prove orbital stability of the black soliton with profile
$u_0$, the idea is to consider solutions $\psi$ of the NLS equation
\eqref{nls} for which $d_R(\psi,u_0)$ is small.  This is certainly the
case if $\|\psi-u_0\|_{H^2}$ is small, but the converse is not true
because $d_R(\psi,u_0)$ does not control the $L^2$ norm of the
difference $\psi - u_0$ on the whole real line.  We shall prove in
Section~\ref{sec:stability} that the distance $d_R$ is well adapted to the
functional $\Lambda$ near $u_0$, in the sense that
\begin{equation}
\label{Lamcoer}
  \Lambda(\psi) - \Lambda(u_0) \,\ge\, C d_R(\psi,u_0)^2
  \qquad \hbox{when}\quad d_R(\psi,u_0) \ll 1,
\end{equation}
provided the perturbation $\psi-u_0$ satisfies a pair of orthogonality
conditions. As is usual in orbital stability theory, these
orthogonality conditions can be fulfilled if we replace $\psi$ by
$e^{i\theta} \psi(\cdot+\xi)$ for some appropriate modulation
parameters $\theta,\xi \in \R$, see Section~\ref{sec:modulation}
below. It is then easy to deduce from \eqref{Lamcoer} that solutions
of the NLS equation \eqref{nls} with initial data $\psi$ satisfying
$d_R(\psi_0,u_0) \ll 1$ will stay close for all times to the orbit of
the black soliton under the group of translations and phase
rotations. The precise statement is:

\begin{theorem}
\label{theorem-soliton}
Fix $R \ge 1$ and let $u_0 \in X$ be the black soliton
\eqref{black-soliton}. Given any $\epsilon > 0$, there exists
$\delta > 0$ such that, for any $\psi_0 \in X$ satisfying
\begin{equation}
\label{bound-initial}
  d_R(\psi_0,u_0) \,\le\, \delta,
\end{equation}
the global solution $\psi(\cdot,t)$ of the NLS equation \eqref{nls}
with initial data $u_0$ has the following property. For any
$t \in \R$, there exist $\xi(t) \in \R$ and $\theta(t) \in
\R/(2\pi\Z)$ such that
\begin{equation}
\label{bound-final}
  d_R\Bigl(e^{i (t + \theta(t))} \psi(\cdot + \xi(t),t)\,,u_0\Bigr)
  \,\le\, \epsilon.
\end{equation}
Moreover $\xi$ and $\theta$ are continuously differentiable
functions of $t$ which satisfy
\begin{equation}
\label{bound-time-per}
  |\dot \xi(t)| + |\dot \theta(t)| \,\le\, C \epsilon, \quad t \in \R,
\end{equation}
for some positive constant $C$.
\end{theorem}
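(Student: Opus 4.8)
The plan is to run the standard Lyapunov-functional argument for orbital stability, with $\Lambda$ playing the role of the conserved Lyapunov functional, the coercivity estimate \eqref{Lamcoer} supplying the lower bound, and a continuity (bootstrap) argument propagating smallness for all times. Throughout I assume the Cauchy problem for \eqref{nls} is globally well posed in $X$, so that $t \mapsto \psi(\cdot,t)$ is continuous into $X$ and $t \mapsto d_R(\psi(\cdot,t),u_0)$ is continuous. The first step is the modulation: for $\psi$ in a small $d_R$-neighborhood of the symmetry orbit of $u_0$, I would invoke the implicit function theorem to select parameters $\theta,\xi$ so that the modulated profile $\tilde\psi(t) := e^{i(t+\theta(t))}\psi(\cdot+\xi(t),t)$ satisfies the two orthogonality conditions required by \eqref{Lamcoer}. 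The governing $2\times 2$ Jacobian is nonsingular because the chosen directions are transverse to the tangent space of the orbit at $u_0$, spanned by the gauge and translation generators $i u_0$ and $u_0'$. This yields $\theta,\xi$ as $C^1$ functions of $t$ as long as $d_R(\tilde\psi,u_0)$ remains below a fixed threshold $\rho$.

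The heart of the argument chains three facts. Since $e^{i(t+\theta(t))}$ is a constant phase at each fixed $t$, and $\Lambda$ is invariant under gauge transformations and translations as well as conserved by the flow, one has $\Lambda(\tilde\psi(t)) = \Lambda(\psi(\cdot,t)) = \Lambda(\psi_0)$. Because $\tilde\psi-u_0$ obeys the orthogonality conditions, \eqref{Lamcoer} applies and gives $C\,d_R(\tilde\psi,u_0)^2 \le \Lambda(\tilde\psi)-\Lambda(u_0)$. For the matching upper bound I would Taylor-expand $\Lambda$ about its critical point $u_0$: using $\Lambda'(u_0)=0$ together with the boundedness of the quadratic form $\Lambda''(u_0)$ and of the cubic remainder in the $d_R$ metric — the companion estimate to \eqref{Lamcoer} that makes $d_R$ the adapted distance — this gives $\Lambda(\psi_0)-\Lambda(u_0) \le C'\,d_R(\psi_0,u_0)^2 \le C'\delta^2$. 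Combining the two bounds yields $d_R(\tilde\psi(t),u_0)^2 \le (C'/C)\,\delta^2$ on every time interval where the modulation and \eqref{Lamcoer} are valid. A continuity argument then closes the loop: choosing $\delta$ so small that $(C'/C)\delta^2 < \rho^2$, the strict inequality $d_R(\tilde\psi(t),u_0) < \rho$ cannot be violated first, hence persists for all $t\in\R$, which is \eqref{bound-final} with $\epsilon \sim \delta$.

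The derivative bound \eqref{bound-time-per} I would obtain by differentiating the two orthogonality conditions in $t$. Substituting the NLS equation \eqref{nls} for $\partial_t\tilde\psi$ and using that $u_0$ is the stationary profile produces a linear system for $(\dot\theta,\dot\xi)$ whose matrix is the nonsingular Jacobian from the modulation step and whose right-hand side is $O(d_R(\tilde\psi,u_0))$; inverting and invoking the bound already established gives $|\dot\theta|+|\dot\xi| \le C\epsilon$.

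I expect the main obstacle to be that the whole scheme must be carried out in the weak metric $d_R$ rather than in $H^2$. Since $\Lambda''(u_0)$ is not coercive — its essential spectrum reaches the origin because of gauge invariance, the generator $i u_0$ failing to lie in $L^2$ — the perturbation can carry a slow phase drift far from the origin that $d_R$ controls only through its local $L^2$ part on $(-R,R)$. This makes the modulation and the derivative estimates delicate: one must verify that the orthogonality conditions remain solvable in this topology, that $\tilde\psi(t)$ is genuinely continuous in $d_R$, and, above all, that the phase parameter $\theta$ — the direction tied to the near-zero continuous spectrum — drifts no faster than $O(\epsilon)$, so that \eqref{bound-time-per} indeed holds.
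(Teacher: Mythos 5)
Your high-level skeleton---modulation via the implicit function theorem, invariance and conservation of $\Lambda$, a two-sided comparison of $\Lambda(\psi)-\Lambda(u_0)$ with $d_R(\psi,u_0)^2$, a continuity/bootstrap argument, and derivative bounds obtained by differentiating the orthogonality conditions---is exactly the structure of the paper's proof (Lemmas~\ref{lemma-xith} and \ref{difflem} together with the closing argument of Section~\ref{sec:stability}). The genuine gap is that you take the coercivity estimate \eqref{Lamcoer}, and its upper-bound companion, as available inputs, whereas establishing them \emph{is} the substance of the theorem's proof: \eqref{Lamcoer} is only announced in the introduction and is proved in Section~\ref{sec:stability} (Proposition~\ref{prop-soliton} and Corollary~\ref{Lambdafinal}) by an argument your proposal does not reproduce. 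Moreover, the route you propose for the upper bound---Taylor expansion of $\Lambda$ at $u_0$ using ``boundedness of the quadratic form $\Lambda''(u_0)$ \dots\ in the $d_R$ metric''---fails. The quadratic form \eqref{Kquad+} contains the term $\int_\R (9u_0^2-5u_0^4-2)u^2\dd x$, whose weight tends to $2$ at infinity, so $\langle K_+u,u\rangle_{L^2}$ involves the full $L^2(\R)$ norm of $u$, while $d_R$ controls only $\|u+iv\|_{L^2(-R,R)}$. Hence $\Lambda''(u_0)$ is neither bounded above nor coercive below with respect to $d_R$; worse, for perturbations that merely have finite $d_R$ (e.g.\ dark solitons \eqref{darksoliton} with small nonzero speed, which the stability statement must accommodate) the functions $u,v$ need not belong to $L^2(\R)$ at all, so the expansion \eqref{DeltaLambda2} is not even finite term by term. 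The same degeneracy blocks the lower bound: $\langle K_-v,v\rangle_{L^2}$ does not control $\|v\|_{L^2}$, so the cubic remainder $N(u,v)$ cannot be absorbed by the quadratic part, as the paper emphasizes right after \eqref{DeltaLambda2}.

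The missing idea is the auxiliary variable $\eta = |\psi|^2-u_0^2 = 2u_0u+u^2+v^2$ of \eqref{etadef}, borrowed from \cite{GS}: the paper re-expands $\Lambda(\psi)-\Lambda(u_0)$ exactly in the variables $(u,v,\eta)$ (Lemma~\ref{lemma-soliton-3}), so that every non-localized quadratic term enters through $\eta$, which $d_R$ does control. It then measures perturbations by the functional $\rho(u,v,\eta)$ of \eqref{rhodef}, proves the equivalence $\rho \sim d_R$ (Lemma~\ref{auxlem}), and obtains coercivity by splitting $\R$ with a cutoff at scale $R$: on $|x|\le R$ one substitutes $\eta = 2u_0u+u^2+v^2$ and invokes the coercivity Lemmas~\ref{lemma-soliton-1} and \ref{lemma-soliton-2} under the orthogonality conditions \eqref{projections2}, while on $|x|\ge R$ the integrands $B_1$, $B_2$, $B_3$ of \eqref{Bdef} are manifestly positive for large $R$ (Lemma~\ref{lemma-soliton-4}, Proposition~\ref{prop-soliton}). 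None of this is standard Lyapunov bookkeeping; it is precisely the step your last paragraph flags as ``the main obstacle'' but leaves unresolved. A minor further point: the orthogonality condition for $v$ must be taken against $u_0''$ rather than against the gauge generator $u_0$, since $u_0\notin L^2(\R)$; this is why \eqref{projections2} has the form it does, and your description of the modulation Jacobian should be adjusted accordingly.
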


\begin{remark}
It is known from the work of Zhidkov \cite{Zhidkov} that the Cauchy
problem for the NLS equation \eqref{nls} is globally well-posed in
$X$. This is the functional framework that is used to define
solutions of \eqref{nls} in Theorem~\ref{theorem-soliton}.
\end{remark}

\begin{remark}
Except for the use of a different distance $d_R$, which controls the
perturbations in the topology of $H^2_{\rm loc}(\R)$,
Theorem~\ref{theorem-soliton} is the exact analogue of the orbital
stability results obtained in \cite{BGSS,GS}. However the proof is
quite different, and in some sense simpler, because the profile $u_0$
of the black soliton is an unconstrained local minimizer of the
higher-order functional $\Lambda$.
\end{remark}

\begin{remark}
It is also possible to prove asymptotic stability results
for the black soliton of the cubic NLS equation \eqref{nls}. In that
perspective, it is useful to consider the black soliton as
a member of the one-parameter family of traveling dark solitons,
given by the exact expression
\begin{equation}
\label{darksoliton}
  e^{it}\psi_{\nu}(x + \nu t,t) \,=\, \sqrt{{\TS 1 - \frac{1}{2} \nu^2}}
  \,\tanh\left(\sqrt{{\TS\frac{1}{2} - \frac{1}{4} \nu^2}}\,x\right)
  + \frac{i\nu}{\sqrt{2}},
\end{equation}
where $\nu \in (-\sqrt{2},\sqrt{2})$. Asymptotic stability of the
family of dark solitons with nonzero speed $\nu$ was proved in
\cite{Bethuel}, using the Madelung transformation and the hydrodynamic
formulation of the NLS equation. This approach applies to solutions
whose modulus is strictly positive, and therefore excludes the case of
the black soliton. Very recently, the asymptotic stability of the
black soliton (within the one-parameter family of all dark solitons)
has been established in \cite{Cuccagna-private,GS}. 
\end{remark}

The rest of this article is organized as follows. In
Section~\ref{sec:positive} we establish positivity and coercivity
properties for the quadratic form associated with the second variation
of the functional \eqref{Lamdef} at $u_0$. In Section~\ref{sec:modulation},
we introduce modulation parameters in a neighborhood of the soliton
profile to eliminate the zero modes of the second variation
$\Lambda''(u_0)$. Combining these results and using a new variable
borrowed from \cite{GS}, we prove in Section~\ref{sec:stability} the
orbital stability of the black soliton \eqref{wave} in the space $X$.

\section{Positivity and coercivity of the second variation}
\label{sec:positive}

Let $u_0$ be the soliton profile \eqref{black-soliton} and $\Lambda =
S - 2E$ be the functional defined by 
\eqref{energy}, \eqref{Sdef}, and \eqref{Lamdef}. In this section, 
we prove that the second variation
$\Lambda''(u_0)$ is nonnegative, as stated in
Proposition~\ref{prop-main}, and we deduce some coercivity properties
that will be used in the proof of Theorem~\ref{theorem-soliton}.  We
consider perturbations of $u_0$ of the form $\psi = u_0 + u + i v$,
where $u,v \in H^2(\R)$ are real-valued. As in \cite{GP}, the second
variations at $u_0$ of the functionals $E$ and $S$ satisfy
\begin{align*}
  \half \langle E''(u_0)[u,v], [u,v]\rangle \,&=\,
  \langle L_+ u,u\rangle_{L^2} + \langle L_- v,v\rangle_{L^2}, \\[1mm]
  \half \langle S''(u_0)[u,v], [u,v]\rangle \,&=\,
  \langle M_+ u,u\rangle_{L^2} + \langle M_- v,v\rangle_{L^2},
\end{align*}
where $\langle\cdot\,,\cdot\rangle_{L^2}$ denotes the usual scalar
product in $L^2(\R)$. The self-adjoint operators $L_\pm$ and $M_\pm$
have the following expressions:
\begin{equation}
\label{operatorsdef}
\begin{array}{l}
  L_+ \,=\, -\partial_x^2 + 3 u_0^2 - 1, \\[1mm]
  L_- \,=\, -\partial_x^2 + u_0^2 - 1,
\end{array} \qquad
\begin{array}{lcl}
  M_+ \,=\, \partial_x^4 - 5 \partial_x u_0^2 \partial_x -5 u_0^4 +
  15 u_0^2 - 4, \\[1mm]
  M_- \,=\, \partial_x^4 - 3 \partial_x u_0^2 \partial_x + u_0^2 - 1.
\end{array}
\end{equation}
In view of \eqref{Lamdef}, it follows that
\begin{equation}
\label{Lambdasecond}
  \half \langle \Lambda''(u_0)[u,v], [u,v]\rangle \,=\,
  \langle K_+ u,u\rangle_{L^2} + \langle K_- v,v\rangle_{L^2},
\end{equation}
where $K_{\pm} = M_{\pm} - 2L_{\pm}$. More explicitly, the quadratic
forms associated with $K_\pm$ are given by
\begin{align}
\label{Kquad+}
  \langle K_+ u,u\rangle_{L^2} \,&=\, \int_\R \Bigl(u_{xx}^2 +
  (5u_0^2-2)u_x^2 + (9u_0^2 -5u_0^4-2)u^2\Bigr)\dd x, \\ \label{Kquad-}
  \langle K_- v,v\rangle_{L^2} \,&=\, \int_\R \Bigl(v_{xx}^2 +
  (3u_0^2-2)v_x^2 + (1-u_0^2)v^2\Bigr)\dd x.
\end{align}

Our first task is to show that the quadratic forms \eqref{Kquad+},
\eqref{Kquad-} are nonnegative on $H^2(\R)$. Due to translation
invariance of the NLS equation \eqref{nls}, we have $L_+ u_0' = M_+
u_0' = 0$, hence also $K_+ u_0' = 0$. As $u_0' \in H^2(\R)$, this
shows that the quadratic form associated with $K_+$ has a neutral
direction, hence is not strictly positive, see Lemma
\ref{lemma-K-plus} below.  The situation is slightly different for
$K_-$: due to gauge invariance, we have $L_- u_0 = M_- u_0 = 0$, hence
$K_- u_0 = 0$, but of course $u_0 \not\in H^2(\R)$. In fact, the
result of Lemma \ref{lemma-K-minus} below shows that the quadratic
form associated with $K_-$ is strictly positive on $H^2(\R)$.

\medskip
We first prove that the quadratic form \eqref{Kquad+} is
nonnegative, see also \cite[Corollary~4.5]{GP}.

\begin{lemma}
\label{lemma-K-plus}
For any $u \in H^2(\R)$, we have
\begin{equation}
\label{operator-K-plus-soliton}
  \langle K_+ u, u \rangle_{L^2} \,=\, \|w_x\|_{L^2}^2 + \|w\|_{L^2}^2
  \,\ge\, 0,
\end{equation}
where $w = u_x + \sqrt{2} u_0 u$.
\end{lemma}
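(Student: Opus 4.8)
The statement is a completion-of-squares identity, so my plan is to verify it directly rather than to invoke any structural theorem. The factor $w = u_x + \sqrt{2}\,u_0 u = A u$, with $A := \partial_x + \sqrt{2}\,u_0$, is the natural object here: since $u_0' = \frac{1}{\sqrt2}(1-u_0^2)$, the differential equations \eqref{wave} give $A u_0' = u_0'' + \sqrt2\,u_0 u_0' = (u_0^3-u_0)+(u_0-u_0^3)=0$, so $A$ annihilates the translation mode $u_0'$, which is exactly the neutral direction of $K_+$. At the operator level the assertion is the factorization $K_+ = A^{*}(-\partial_x^2+1)A$ with $A^{*} = -\partial_x+\sqrt2\,u_0$, from which $\langle K_+u,u\rangle_{L^2} = \langle(-\partial_x^2+1)Au,Au\rangle_{L^2} = \|(Au)_x\|_{L^2}^2 + \|Au\|_{L^2}^2 = \|w_x\|_{L^2}^2+\|w\|_{L^2}^2 \ge 0$ follows at once.

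To turn this into a proof I would expand the right-hand side and match it to the quadratic form \eqref{Kquad+}. First I compute $\|w\|_{L^2}^2 = \int_\R(u_x^2 + 2\sqrt2\,u_0 u u_x + 2u_0^2 u^2)\dd x$; the cross term equals $\sqrt2\int_\R u_0 (u^2)_x\dd x = -\sqrt2\int_\R u_0' u^2\dd x = \int_\R(u_0^2-1)u^2\dd x$ after one integration by parts and the substitution $\sqrt2\,u_0' = 1-u_0^2$, so that $\|w\|_{L^2}^2 = \int_\R\bigl(u_x^2+(3u_0^2-1)u^2\bigr)\dd x$. Next, writing $w_x = u_{xx} + \sqrt2\,u_0 u_x + (1-u_0^2)u$, I expand the square of this trinomial and integrate each of the three mixed terms by parts, every time using \eqref{wave} to eliminate $u_0'$ and $u_0''$ (in particular $u_0'' = u_0^3-u_0$ and $(u_0')^2 = \frac12(1-u_0^2)^2$) so that all coefficients reduce to polynomials in $u_0$. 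Collecting terms should yield $\|w_x\|_{L^2}^2 = \int_\R\bigl(u_{xx}^2+(5u_0^2-3)u_x^2+(6u_0^2-5u_0^4-1)u^2\bigr)\dd x$, and adding the two contributions reproduces precisely the coefficients $5u_0^2-2$ and $9u_0^2-5u_0^4-2$ of \eqref{Kquad+}.

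There is no genuine conceptual obstacle: the argument is a finite, if slightly tedious, integration-by-parts computation, and the only point demanding care is the justification that all boundary terms vanish. This is where the non-decay of $u_0$ must be addressed, since $u_0(x)\to\pm1$ as $|x|\to\infty$. The key observation is that every boundary contribution produced by the integrations by parts carries either a factor of $u$ or $u_x$, or else a coefficient such as $u_0'$ or $1-u_0^2$ that decays exponentially; thus the products $u_0 u u_x$, $u_0 u_x^2$, and $u_0 u_0' u^2$ all tend to zero at $\pm\infty$ for $u\in H^2(\R)$, and the integrated terms are legitimate on the whole line. Once the identity \eqref{operator-K-plus-soliton} is established, nonnegativity is immediate, and as a byproduct the equality case $\langle K_+u,u\rangle_{L^2}=0$ forces $w=Au=0$, i.e. $u$ solves $u_x = -\sqrt2\,u_0 u$ and hence $u\in\ker A\cap H^2(\R)=\mathrm{span}\{u_0'\}$, recovering the announced neutral direction.
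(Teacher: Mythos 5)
Your proposal is correct and follows essentially the same route as the paper: expand $\|w\|_{L^2}^2$ and $\|w_x\|_{L^2}^2$, integrate the mixed terms by parts using \eqref{wave}, and match the result with the coefficients of \eqref{Kquad+}; your intermediate identity for $\|w_x\|_{L^2}^2$ agrees with the paper's, since $(1-u_0^2)(5u_0^2-1)=6u_0^2-5u_0^4-1$. The operator factorization $K_+=A^{*}(-\partial_x^2+1)A$ and the discussion of the kernel of $A$ are just a conceptual repackaging of the same computation (the kernel observation appears in the paper as Remark~\ref{remark-black-plus}), so there is no substantive difference.
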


\begin{proof}
Integrating by parts and using the differential equations
\eqref{wave} satisfied by $u_0$, we easily obtain
\begin{equation}\label{LemK+1}
  \int_\R w^2 \dd x \,=\, \int_\R \Bigl(u_x^2 + 2\sqrt{2}u_0
  u u_x + 2u_0^2 u^2\Bigr) \dd x \,=\, \int_\R
  \Bigl(u_x^2 + (3u_0^2-1)u^2\Bigr)\dd x.
\end{equation}
Similarly, as $w_x = u_{xx} + \sqrt{2}u_0u_x + \sqrt{2}u_0'u$, we
find
\begin{align}\nonumber
  \int_\R w_x^2 \dd x \,&=\,  \int_\R \Bigl(u_{xx}^2 +  2\sqrt{2}u_0
  u_x u_{xx}+ 2u_0^2 u_x^2 + 2\sqrt{2}u_0' uu_{xx} + 4u_0 u_0' uu_x
  + 2u_0'^2 u^2\Bigr)\dd x \\ \nonumber
  \,&=\, \int_\R \Bigl(u_{xx}^2 + (5u_0^2-3)u_x^2 + 8u_0 u_0' uu_x
  + 2u_0'^2 u^2\Bigr)\dd x \\ \label{LemK+2}
  \,&=\, \int_\R \Bigl(u_{xx}^2 + (5u_0^2-3)u_x^2 + (1-u_0^2)(5u_0^2-1)
  u^2\Bigr)\dd x,
\end{align}
because $2u_0'^2 - 4(u_0u_0')' = (1-u_0^2)(5u_0^2-1)$. If we now
combine \eqref{LemK+1} and \eqref{LemK+2}, we see that
$\|w_x\|_{L^2}^2 + \|w\|_{L^2}^2$ is equal to the right-hand side of
\eqref{Kquad+}, which is the desired conclusion.
\end{proof}

\begin{remark}
\label{remark-black-plus}
The right-hand side of \eqref{operator-K-plus-soliton}
vanishes if and only if $w = 0$, which is equivalent to
$u = C u_0'$ for some constant $C$. Thus zero is a simple
eigenvalue of $K_+$ in $L^2(\R)$. Moreover, since $u_0(x)
\to \pm 1$ as $x \to \pm \infty$, it is clear from \eqref{Kquad+}
that the essential spectrum of $K_+$ is the interval $[2,\infty)$.
Thus if we restrict ourselves to the orthogonal complement of
$u_0'$ with respect to the scalar product $\langle\cdot\,,
\cdot\rangle_{L^2}$, the spectrum of $K_+$ is bounded from below
by a strictly positive constant, and the corresponding quadratic
form is thus coercive in the topology of $H^2(\R)$, see
Remark~\ref{oldcond} below.
\end{remark}

We next prove the positivity of the quadratic form \eqref{Kquad-},
see also \cite[Lemma~4.1]{GP}.

\begin{lemma}
\label{lemma-K-minus}
For any $v \in H^2(\R)$, we have
\begin{equation}
\label{operator-K-minus}
  \langle K_- v, v \rangle_{L^2} \,=\, \| L_- v \|_{L^2}^2 +
  \| u_0 v_x - u_0' v \|_{L^2}^2 \,\ge\, 0,
\end{equation}
where $L_- = -\partial_x^2 + u_0^2 - 1$.
\end{lemma}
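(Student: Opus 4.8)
The plan is to establish the sum-of-squares identity \eqref{operator-K-minus} by a direct computation, exactly in the spirit of the proof of Lemma~\ref{lemma-K-plus}. Once the identity holds, nonnegativity is immediate, since the right-hand side is a sum of two squared $L^2$ norms; the real content is the algebraic verification that these two squares add up to the quadratic form \eqref{Kquad-}. Rather than guessing the factorization, I would start from the claimed right-hand side of \eqref{operator-K-minus} and reduce it to \eqref{Kquad-} by integrating by parts and invoking the differential equations \eqref{wave}, namely $u_0' = \frac{1}{\sqrt2}(1-u_0^2)$ and $u_0'' = u_0(u_0^2-1)$, to close the computation.

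First I would expand the first square. Writing $L_- v = -v_{xx} + (u_0^2-1)v$, one finds
\begin{equation*}
  \|L_- v\|_{L^2}^2 \,=\, \int_\R \Bigl(v_{xx}^2 - 2(u_0^2-1)\,v v_{xx} + (u_0^2-1)^2 v^2\Bigr)\dd x .
\end{equation*}
The middle term is integrated by parts once; since $(u_0^2-1)_x = 2u_0 u_0'$, it yields a contribution $2(u_0^2-1)v_x^2$ together with a cross term $4u_0 u_0'\,v v_x$. Expanding the second square gives
\begin{equation*}
  \|u_0 v_x - u_0' v\|_{L^2}^2 \,=\, \int_\R \Bigl(u_0^2 v_x^2 - 2u_0 u_0'\,v v_x + u_0'^2 v^2\Bigr)\dd x ,
\end{equation*}
whose $u_0^2 v_x^2$ term supplies precisely what is needed to build the coefficient $3u_0^2-2$ of $v_x^2$ in \eqref{Kquad-}.

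The crucial step is the treatment of the surviving $v v_x$ terms. Combining the two expansions, they reduce to the single integral $\int_\R 2u_0 u_0'\,v v_x \dd x = \half \int_\R (u_0^2)_x (v^2)_x \dd x$, which I would integrate by parts once more to obtain $-\half\int_\R (u_0^2)_{xx}\,v^2\dd x$. Here the relations \eqref{wave} enter decisively: using $(u_0^2)_{xx} = 2u_0'^2 + 2u_0 u_0'' = (1-u_0^2)^2 + 2u_0^2(u_0^2-1)$, this term cancels the spurious $u_0'^2$ coming from the second square and collapses the full $v^2$ coefficient $(u_0^2-1)^2 + u_0'^2 - u_0'^2 - u_0^2(u_0^2-1)$ down to $1-u_0^2$, in agreement with \eqref{Kquad-}. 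The $v_{xx}^2$ and $v_x^2$ coefficients then match as well, which completes the identity.

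I expect the main obstacle to be the bookkeeping rather than any conceptual difficulty: one must track several cross terms and the sign conventions through the successive integrations by parts, and apply the two identities in \eqref{wave} at exactly the right places so that the $v^2$ coefficient collapses. The only analytic point to verify is that no boundary terms survive, which is clear because $v \in H^2(\R)$ while $u_0'$ and $u_0''$ decay exponentially, so every product integrated by parts vanishes at $\pm\infty$. Finally, although the lemma asserts only nonnegativity, the sum-of-squares form shows that $\langle K_- v, v\rangle_{L^2}=0$ forces $L_- v = 0$ and $u_0 v_x = u_0' v$; the latter makes $v$ proportional to $u_0 \notin H^2(\R)$, so the quadratic form is in fact strictly positive on $H^2(\R)$, as anticipated in the discussion preceding the lemma.
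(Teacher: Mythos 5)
Your proposal is correct and takes essentially the same approach as the paper: expand both squares $\|L_-v\|_{L^2}^2$ and $\|u_0 v_x - u_0' v\|_{L^2}^2$, integrate the cross terms by parts (with boundary terms vanishing since $v \in H^2(\R)$), and invoke \eqref{wave} to collapse the $v^2$ coefficient to $1-u_0^2$, recovering \eqref{Kquad-}. The only differences are cosmetic bookkeeping (you merge the $vv_x$ terms before a final integration by parts, whereas the paper processes each square separately) and your closing observation on strict positivity, which reproduces the paper's remark following the lemma.
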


\begin{proof}
Integrating by parts we obtain
\begin{align*}
  \int_\R (L_-v)^2 \dd x \,&=\, \int_{\R} \Bigl(v_{xx}^2 + 2(1 - u_0^2) v v_{xx}
  + (1-u_0^2)^2 v^2 \Bigr) \dd x \\ \,&=\, \int_{\R} \Bigl(v_{xx}^2
  + 2(u_0^2-1) v_x^2 -2(u_0 u_0')'v^2 + (1-u_0^2)^2v^2 \Bigr) \dd x.
\end{align*}
Similarly, we have
\[
 \int_\R \Bigl(u_0 v_x - u_0' v\Bigr)^2\dd x \,=\,
 \int_{\R} \Bigl(u_0^2 v_x^2 + (u_0u_0')'v^2 + u_0'^2 v^2\Bigr) \dd x.
\]
It follows that
\[
  \|L_- v\|_{L^2}^2 + \|u_0 v_x - u_0' v\|_{L^2}^2 \,=\, \int_{\R}
  \Bigl(v_{xx}^2 + (3 u_0^2 - 2) v_x^2 + [(1-u_0^2)^2 - u_0 u_0''] v^2
  \Bigl)\dd x,
\]
and that expression coincides with the right-hand side of
\eqref{Kquad-} since $(1-u_0^2)^2 - u_0 u_0'' = 1 - u_0^2$ by
\eqref{wave}. This proves \eqref{operator-K-minus}.
\end{proof}

\begin{remark}
The right-hand side of \eqref{operator-K-minus} vanishes if and only
if $L_-v = 0$ and $u_0 v_x - u_0' v = 0$, namely if $v = C u_0$ for
some constant $C$. As $u_0 \notin H^2(\R)$, this shows that $\langle K_-
v, v \rangle_{L^2} > 0$ for any nonzero $v \in H^2(\R)$. However,
since $|u_0(x)| \to 1$ as $|x| \to \infty$, it is clear from the
representation \eqref{Kquad-} that zero belongs to the essential
spectrum of the operator $K_-$, hence the associated quadratic form is
not coercive in the topology of $H^2(\R)$. Some weaker coercivity
property will nevertheless be established below, see
Remark~\ref{newcond}.
\end{remark}

\begin{remark}
In view of the decomposition \eqref{Lambdasecond},
Proposition~\ref{prop-main} is an immediate consequence of
Lemmas~\ref{lemma-K-plus} and \ref{lemma-K-minus}.
\end{remark}

In the rest of this section, we show that the quadratic
forms \eqref{Kquad+}, \eqref{Kquad-} are not only positive,
but also coercive in some appropriate sense.

\begin{lemma}
\label{lemma-soliton-1}
Let $u_0$ be the black soliton \eqref{black-soliton}. There exists a
positive constant $C$ such that, for any $u \in H^2(\R)$
satisfying $\langle u_0', u \rangle_{L^2} = 0$, we have the estimate
\begin{equation}
\label{bound-u}
  \| u \|_{H^2} \,\le\, C \| w \|_{H^1},
\end{equation}
where $w = u_x + \sqrt{2} u_0 u$.
\end{lemma}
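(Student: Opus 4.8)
The plan is to prove the equivalent coercivity estimate $\|u\|_{H^2}^2 \le C\,\langle K_+ u,u\rangle_{L^2}$ for all $u \in H^2(\R)$ with $\langle u_0',u\rangle_{L^2}=0$, and then to invoke Lemma~\ref{lemma-K-plus}. Indeed, by \eqref{operator-K-plus-soliton} we have $\langle K_+ u,u\rangle_{L^2} = \|w_x\|_{L^2}^2 + \|w\|_{L^2}^2 = \|w\|_{H^1}^2$, so such an estimate is precisely \eqref{bound-u} after taking square roots. I would establish this coercivity from two independent ingredients, which together upgrade the mere positivity of $K_+$ to full control of the $H^2$ norm on the orthogonal complement of the neutral direction $u_0'$.

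The first ingredient is a Gårding-type inequality valid for every $u \in H^2(\R)$, with no orthogonality assumption. Starting from the explicit quadratic form \eqref{Kquad+} and using that $0 \le u_0^2 \le 1$, the coefficients $5u_0^2-2$ and $9u_0^2-5u_0^4-2$ are bounded below by $-2$, so that $\langle K_+ u,u\rangle_{L^2} \ge \|u_{xx}\|_{L^2}^2 - 2\|u_x\|_{L^2}^2 - 2\|u\|_{L^2}^2$. Absorbing the first-order term with the interpolation inequality $\|u_x\|_{L^2}^2 \le \eps\|u_{xx}\|_{L^2}^2 + C_\eps\|u\|_{L^2}^2$ (which follows from $\|u_x\|_{L^2}^2 = -\langle u_{xx},u\rangle_{L^2}$) then produces constants $c_1>0$ and $c_2\ge 0$ such that $\langle K_+ u,u\rangle_{L^2} \ge c_1\|u\|_{H^2}^2 - c_2\|u\|_{L^2}^2$.

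The second, and harder, ingredient is the $L^2$ coercivity $\langle K_+ u,u\rangle_{L^2} \ge \delta\|u\|_{L^2}^2$ for some $\delta>0$, valid only for $u \in H^2(\R)$ with $\langle u_0',u\rangle_{L^2}=0$. Here I would argue spectrally. The operator $K_+$ is self-adjoint on $L^2(\R)$ with form domain $H^2(\R)$; by Lemma~\ref{lemma-K-plus} it is nonnegative, and by Remark~\ref{remark-black-plus} its kernel is the one-dimensional space spanned by $u_0'$, while its essential spectrum is $[2,\infty)$ because $u_0^2 \to 1$ at infinity. Consequently zero is an isolated, simple eigenvalue at the bottom of the spectrum, and the spectrum of $K_+$ restricted to the orthogonal complement of $u_0'$ is contained in $[\delta,\infty)$ for some $\delta>0$; the spectral theorem then yields the claimed lower bound on $\{u_0'\}^\perp$. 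This step is the crux of the argument: it is exactly here that the nonnegativity of Lemma~\ref{lemma-K-plus}, the identification of the kernel, and the strictly positive essential spectrum all enter at once, and it is what fails for $K_-$, whose essential spectrum reaches down to the origin.

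Finally I would combine the two estimates. For $u \in H^2(\R)$ with $\langle u_0',u\rangle_{L^2}=0$, adding $c_2/\delta$ times the $L^2$ coercivity to the Gårding inequality cancels the negative term and gives $(1+c_2/\delta)\langle K_+ u,u\rangle_{L^2} \ge c_1\|u\|_{H^2}^2$. Together with the identity $\langle K_+ u,u\rangle_{L^2} = \|w\|_{H^1}^2$ this establishes \eqref{bound-u} with $C = \bigl((1+c_2/\delta)/c_1\bigr)^{1/2}$.
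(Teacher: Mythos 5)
Your proof is correct, but it takes a genuinely different route from the paper. The paper's proof is constructive: it solves the first-order ODE $u_x + \sqrt{2}\,u_0 u = w$ by Duhamel's formula, writing $u = A u_0' + W$ with $W = \hat{K}(w)$ given by the explicit kernel in \eqref{variation-u}; the orthogonality condition pins down $|A| \le C\|w\|_{L^2}$, the $L^2$-boundedness of $\hat{K}$ follows from Riesz--Thorin interpolation between explicit $L^1$ and $L^\infty$ bounds, and the derivatives $u_x$, $u_{xx}$ are then recovered directly from the ODE. You instead upgrade the positivity of Lemma~\ref{lemma-K-plus} to a spectral gap: a G\aa rding-type inequality, plus the facts that $\ker K_+ = \mathrm{span}\{u_0'\}$ and $\sigma_{\mathrm{ess}}(K_+) = [2,\infty)$, so that $K_+ \ge \delta$ on the orthogonal complement of $u_0'$. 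This is sound, and it is essentially the reasoning sketched in Remark~\ref{remark-black-plus}; the one point you should not gloss over is that the essential-spectrum claim is not a consequence of \eqref{Kquad+} alone but requires Weyl's theorem, i.e.\ the relative compactness of the difference between $K_+$ and its constant-coefficient limit $\partial_x^4 - 3\partial_x^2 + 2$ (standard here, since $1-u_0^2$ and $u_0'$ decay exponentially, but a genuine ingredient). As for what each approach buys: yours is shorter given standard spectral theory and avoids any explicit solution formula, at the price of a non-quantitative constant $\delta$; the paper's is elementary and self-contained (no spectral theorem, no Weyl theorem), yields in principle computable constants, and---more importantly for this paper---its explicit representation $u = Au_0' + \hat{K}(w)$ is reused later in localized form in the proof of Proposition~\ref{prop-soliton} (estimate \eqref{lowbdaux}, where all integrals are restricted to $|x|\le R$ and error terms $\mathcal{O}(e^{-R}\rho^2)$ must be tracked), something the abstract spectral-gap argument does not readily provide.
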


\begin{proof}
Solving the linear differential equation $u_x + \sqrt{2} u_0 u = w$
by Duhamel's formula, we find $u = A u_0' + W$ for some $A \in \R$,
where
\begin{equation}
\label{variation-u}
  W(x) \,=\, \int_0^x K(x,y) w(y) \dd y, \qquad K(x,y) \,=\,
 \frac{\cosh^2(y/\sqrt{2})}{\cosh^2(x/\sqrt{2})}.
\end{equation}
The constant $A$ is uniquely determined by the orthogonality
condition $\langle u_0', u \rangle_{L^2} = 0$, which implies that
$A \|u_0'\|_{L^2}^2 + \langle u_0', W \rangle_{L^2} = 0$. Using
\eqref{variation-u}, we easily obtain
\begin{align}\nonumber
  \langle u_0', W \rangle_{L^2} \,&=\, \int_{-\infty}^\infty \biggl\{\int_0^x
  K(x,y) w(y) \dd y\biggr\}u_0'(x) \dd x \\ \nonumber
  \,&=\, \int_0^\infty \biggl\{\int_y^\infty
  K(x,y) u_0'(x)\dd x\biggr\}\Bigl(w(y)-w(-y)\Bigr) \dd y \\ \label{intex}
  \,&=\,  \frac{1}{3} \int_0^\infty e^{-\sqrt{2}y}\,\frac{3+e^{-\sqrt{2}y}}{
  1+e^{-\sqrt{2}y}}\Bigl(w(y)-w(-y)\Bigr) \dd y,
\end{align}
hence $|\langle u_0', W \rangle_{L^2}| \le 2^{-1/4}\|w\|_{L^2}$.
It follows that $|A| \le C\|w\|_{L^2}$ for some $C > 0$.

On the other hand, if we introduce the operator notation $W =
\hat{K}(w)$ for the representation \eqref{variation-u}, we note
that $\hat{K}$ is a bounded operator from $L^{\infty}(\R)$ to
$L^{\infty}(\R)$ with norm
$$
  K_{\infty} \,=\, \sup_{x \in \R} \int_0^{|x|} K(x,y) \dd y
  \,=\, \frac{1}{\sqrt{2}}
  \sup_{x \in \R} \frac{1 + 2 \sqrt{2} |x| e^{-\sqrt{2}|x|}
  - e^{-2 \sqrt{2}|x|}}{1 + 2 e^{-\sqrt{2}|x|} + e^{-2 \sqrt{2}|x|}}
  \,<\, \infty,
$$
as well as a bounded operator from $L^1(\R)$ to
$L^1(\R)$ with norm
$$
  K_1 \,=\, \sup_{y \in \R} \int_{|y|}^{\infty} K(x,y) \dd x
  \,=\, \frac{1}{\sqrt{2}}
  \sup_{y \in \R} \Bigl(1 + e^{-\sqrt{2}|y|}\Bigr) = \sqrt{2}.
$$
By the Riesz-Thorin interpolation theorem, it follows that $\hat{K}$
is a bounded operator from $L^2(\R)$ to $L^2(\R)$, and we have the
estimate $\|W\|_{L^2} = \|\hat{K}(w) \|_{L^2} \leq (K_1 K_{\infty})^{1/2}
\|w\|_{L^2}$.

Summarizing, we have shown that $\|u\|_{L^2} \le |A| \|u_0'\|_{L^2} +
\|W\|_{L^2} \le C \|w\|_{L^2}$ for some $C > 0$. Since $w = u_x +
\sqrt{2} u_0 u$, we also have $\|u_x\|_{L^2} \le \|w\|_{L^2} +
\sqrt{2}\|u\|_{L^2}$ and (after differentiating) $\|u_{xx}\|_{L^2} \le
\|w_x\|_{L^2} + \sqrt{2} \|u_x\|_{L^2} + \| u \|_{L^2}$. This proves
the bound \eqref{bound-u}.
\end{proof}

\begin{remark}\label{oldcond}
Combining \eqref{operator-K-plus-soliton} and \eqref{bound-u}, we conclude
that there exists a constant $C_+ > 0$ such that
\begin{equation}
\label{K+coercive}
 \langle K_+ u, u \rangle_{L^2} \,\ge\, C_+ \|u\|_{H^2}^2,
\end{equation}
for all $u \in H^2(\R)$ satisfying $\langle u_0', u \rangle_{L^2} = 0$.
\end{remark}

\begin{lemma}
\label{lemma-soliton-2}
Let $u_0$ be the black soliton \eqref{black-soliton}. There exists
a positive constant $C$ such that, for any $v \in
H^2_{\rm loc}(\R)$ satisfying $v_x \in H^1(\R)$ and
$\langle u_0'', v \rangle_{L^2} = 0$, we have the estimate
\begin{equation}
\label{bound-v}
  \| v_{xx} \|_{L^2} + \| v_x \|_{L^2} + |v(0)|  \,\le\, C(\| p \|_{L^2}
  + \| q \|_{L^2}),
\end{equation}
where $p = u_0 v_x - u_0' v$ and $q = -L_- v = v_{xx} + (1-u_0^2) v$.
\end{lemma}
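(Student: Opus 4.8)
The plan is to control the three quantities on the left of \eqref{bound-v} separately, reducing everything to an $L^2$ bound on $v_x$ that can be imported from Lemma~\ref{lemma-soliton-1}. The starting observation is the algebraic identity $p' = u_0 q$, which follows by differentiating $p = u_0 v_x - u_0' v$ and using $u_0'' = -u_0(1-u_0^2)$ from \eqref{wave}. Since $|u_0|\le 1$, this gives $\|p_x\|_{L^2} = \|u_0 q\|_{L^2}\le \|q\|_{L^2}$, so that $p \in H^1(\R)$ with $\|p\|_{H^1}\le \|p\|_{L^2}+\|q\|_{L^2}$. By the Sobolev embedding $H^1(\R)\hookrightarrow L^\infty(\R)$, the value $p(0)$ is then controlled by the right-hand side of \eqref{bound-v}. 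On the other hand, evaluating $p = u_0 v_x - u_0' v$ at $x=0$ and using $u_0(0)=0$ and $u_0'(0)=1/\sqrt2$ yields the pointwise identity $v(0) = -\sqrt2\,p(0)$ (legitimate since $v_x \in H^1 \hookrightarrow C^0$), so $|v(0)|$ is bounded as required.

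The heart of the argument is the estimate on $\|v_x\|_{L^2}$. A direct attempt to invert the first-order relation $p = u_0 v_x - u_0' v$ for $v$, in the spirit of Lemma~\ref{lemma-soliton-1}, runs into the zero of $u_0$ at the origin, which makes the integrating factor $1/u_0^2$ non-integrable; this is the main obstacle, and it reflects the fact that $v(0)$ must be retained as an independent quantity. The key idea to circumvent it is to solve instead for $v$ in terms of $v_x$: since $u_0' = \frac{1}{\sqrt2}(1-u_0^2)$ is strictly positive everywhere, one may write $v = (u_0 v_x - p)/u_0'$ with no singularity. Substituting this into $q = v_{xx} + (1-u_0^2)v$ and using $(1-u_0^2)/u_0' = \sqrt2$, all the awkward coefficients collapse and we obtain, for $g := v_x$, the first-order equation $g_x + \sqrt2\,u_0\,g = q + \sqrt2\,p$. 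This is precisely the equation $u_x + \sqrt2\,u_0\,u = w$ treated in Lemma~\ref{lemma-soliton-1}, now with unknown $g$ and right-hand side $w = q + \sqrt2\,p \in L^2(\R)$; note that the associated homogeneous operator has one-dimensional kernel spanned by $u_0'\in L^2(\R)$, with no growing solution, so the Duhamel representation of that proof applies verbatim.

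It remains to verify the orthogonality hypothesis and to invoke the right estimate. Integrating the assumption $\langle u_0'', v\rangle_{L^2}=0$ by parts, with vanishing boundary terms since $u_0'$ decays exponentially while $|v(x)|\le |v(0)| + \sqrt{|x|}\,\|v_x\|_{L^2}$, gives $\langle u_0', v_x\rangle_{L^2}=0$, which is exactly the orthogonality condition required in Lemma~\ref{lemma-soliton-1} applied to $g$. Because the source $q+\sqrt2\,p$ lies only in $L^2(\R)$, we cannot use the full $H^2$ conclusion of that lemma, but the argument in its proof establishes the intermediate bound $\|g\|_{L^2}\le C\|q+\sqrt2\,p\|_{L^2}$, whence $\|v_x\|_{L^2}\le C(\|p\|_{L^2}+\|q\|_{L^2})$. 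Finally, writing $v_{xx} = q - (1-u_0^2)v$ and estimating $\|(1-u_0^2)v\|_{L^2}\le C(|v(0)|+\|v_x\|_{L^2})$ — via $|v(x)|^2\le 2v(0)^2 + 2|x|\,\|v_x\|_{L^2}^2$ together with the exponential decay of $1-u_0^2$ — bounds $\|v_{xx}\|_{L^2}$ by the right-hand side of \eqref{bound-v} as well. Collecting the three estimates completes the proof.
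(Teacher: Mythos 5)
Your proof is correct, and its central step takes a genuinely different route from the paper's. The two arguments share the same endpoints: the identity $p_x = u_0 q$, the evaluation $v(0) = -\sqrt{2}\,p(0)$ via $p \in H^1(\R) \hookrightarrow L^\infty(\R)$, and the recovery of $v_{xx}$ from $v_{xx} = q - (1-u_0^2)v$ using the exponential decay of $1-u_0^2$ against the sublinear growth of $v$. Where you diverge is the estimate on $\|v_x\|_{L^2}$. The paper solves the first-order equation $u_0 v_x - u_0' v = p$ for $v$ itself, writing $v = B u_0 + Z$ with the explicit kernel formula \eqref{variation-v}, fixes $B$ by the orthogonality condition $\langle u_0'', v\rangle_{L^2} = 0$, and controls $\langle u_0'', Z\rangle$ by playing the $|x|^{1/2}$ growth of $Z$ against the exponential decay of $u_0''$; the bound on $v_x$ then comes from the explicit representation \eqref{variation-v2}. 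You instead eliminate $v$ altogether --- legitimately, since $u_0' = \tfrac{1}{\sqrt2}(1-u_0^2)$ never vanishes --- and derive the equation $g_x + \sqrt{2}\,u_0\,g = q + \sqrt{2}\,p$ for $g = v_x$, which is precisely the equation of Lemma~\ref{lemma-soliton-1}; after converting the hypothesis $\langle u_0'', v\rangle_{L^2}=0$ into $\langle u_0', v_x\rangle_{L^2}=0$ by an integration by parts (with boundary terms correctly justified), you import the intermediate $L^2$ bound from that lemma's proof. Your reduction buys economy and structural insight: it reuses the Duhamel representation \eqref{variation-u} and the Riesz--Thorin bound on $\hat K$ already established, avoids any direct handling of the growing function $Z$, and explains why $u_0''$ is the natural orthogonality direction for $v$ (it is the pullback of the condition $\langle u_0',\cdot\rangle_{L^2}=0$ under differentiation). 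You were also right to flag that only the $L^2$ conclusion of Lemma~\ref{lemma-soliton-1} is available, since $w = q+\sqrt{2}p$ lies only in $L^2(\R)$. What the paper's self-contained computation buys in exchange is an explicit formula for $v$ itself, including the neutral mode $Bu_0$ coming from gauge invariance, which makes the degeneracy of $K_-$ visible and is echoed later in the modulation analysis.
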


\begin{proof}
Any solution of the linear differential equation $u_0 v_x - u_0' v = p$
has the form
$v = B u_0 + Z$ for some $B \in \R$, where
\begin{equation}
\label{variation-v}
  Z(x) \,=\, u_0(x)\int_0^x \Bigl(p(y) + \sqrt{2}q(y)\Bigr)\dd y
  - \sqrt{2}p(x).
\end{equation}
Indeed, we observe that $p_x = u_0 v_{xx} - u_0'' v = u_0 (v_{xx}
+ (1-u_0^2) v) = u_0 q$. Thus, if $v = B u_0 + Z$, we have
\begin{equation}
\label{variation-v2}
  v_x(x) \,=\, u_0'(x)\left(B + \int_0^x \Bigl(p(y) + \sqrt{2}q(y)
  \Bigr)\dd y\right) + u_0(x)p(x),
\end{equation}
hence $u_0 v_x - u_0' v = (u_0^2 + \sqrt{2}u_0')p = p$. The constant
$B$ is uniquely determined by the orthogonality condition
$\langle u_0'', v \rangle_{L^2} = 0$, which implies that
$B \|u_0'\|_{L^2}^2 = \langle u_0'',Z\rangle$.

Since $p \in L^2(\R)$ and $p_x = u_0 q \in L^2(\R)$, we have $p \in
L^\infty(\R)$ by Sobolev's embedding resulting in the bound
$\|p\|_{L^\infty}^2 \le \|p\|_{L^2} \|p_x\|_{L^2} \le
\|p\|_{L^2}\|q\|_{L^2}$. Thus, using \eqref{variation-v} and
H\"older's inequality, we deduce that
$$
  |Z(x)| \,\le\, \sqrt{2} (|x|^{1/2} + 1)(\|p\|_{L^2} + \|q\|_{L^2}), \quad
  x \in \R.
$$
This moderate growth of $Z$ is compensated for by the exponential
decay of $u_0''$ to zero at infinity, and we obtain $|\langle u_0'',Z\rangle|
\le C(\|p\|_{L^2} + \|q\|_{L^2})$ for some $C > 0$, hence also
$|B| \le C(\|p\|_{L^2} + \|q\|_{L^2})$. In the same way, it follows
from \eqref{variation-v2} that $\|v_x\|_{L^2} \le C(\|p\|_{L^2} +
\|q\|_{L^2})$. A similar estimate holds for $\|v_{xx}\|_{L^2}$
because $v_{xx} = q - (1 - u_0^2) v$ and $1 - u_0^2$ has the exponential
decay to zero at infinity. Finally, since $v(0) =
-\sqrt{2}p(0)$, we also have $|v(0)| \le C(\|p\|_{L^2} + \|q\|_{L^2})$.
This proves the bound \eqref{bound-v}.
\end{proof}

\begin{remark}
\label{newcond}
Combining \eqref{operator-K-minus}
and \eqref{bound-v}, we conclude that there exists a constant
$C_- > 0$ such that
\begin{equation}
\label{K-coercive}
  \langle K_- v, v \rangle_{L^2} \,\ge\, C_- \Bigl(\|v_x\|_{H^1}^2
  + |v(0)|^2\Bigr),
\end{equation}
for all $v \in H^2_{\rm loc}(\R)$ satisfying $v_x \in H^1(\R)$ and
$\langle u_0'', v \rangle_{L^2} = 0$. As is clear from the
proof of Lemma~\ref{lemma-soliton-2}, we need some orthogonality
condition on $v$ to prove estimate \eqref{K-coercive}, and
since $u_0 \notin L^2(\R)$ we cannot impose $\langle u_0,
v \rangle_{L^2} = 0$. Thus we use $u_0'' = u_0(u_0^2-1)$ instead
of $u_0$. Although $u_0''$ is only an approximate eigenfunction
of $K_-$, the orthogonality condition $\langle u_0'', v \rangle_{L^2} = 0$ is
good enough for our purposes, as we shall see in Section~\ref{sec:modulation}.
\end{remark}

\section{Modulation parameters near the black soliton}
\label{sec:modulation}

This section contains some important preliminary steps in the
proof of Theorem \ref{theorem-soliton}. To establish the orbital
stability of the black soliton with profile $u_0$, our
general strategy is to consider solutions $\psi(x,t)$ of the
cubic NLS equation \eqref{nls} of the form
\begin{equation}
\label{decomposition2}
  e^{i(t + \theta(t))} \psi(x + \xi(t),t) \,=\, u_0(x) + u(x,t) + i v(x,t),
  \qquad (x,t) \in \R \times \R,
\end{equation}
where the perturbations $u,v$ are real-valued and satisfy the
orthogonality conditions
\begin{equation}
\label{projections2}
  \langle u_0', u(\cdot,t) \rangle_{L^2} \,=\, 0, \qquad
  \langle u_0'', v(\cdot,t) \rangle_{L^2} \,=\, 0, \qquad t \in \R.
\end{equation}
As was discussed in Remarks \ref{oldcond} and \ref{newcond}, these
conditions are needed to exploit the coercivity properties of the
second variation $\Lambda''(u_0)$, where $\Lambda$ is the conserved
quantity \eqref{Lamdef}. They also allow us to determine uniquely the
``modulation parameters'', namely the translation $\xi(t)$ and the
phase $\theta(t)$, at least for solutions $\psi(x,t)$ in a small
neighborhood of the black soliton. To make these considerations
rigorous, we first need to specify in which topology that neighborhood
is understood; in other words, we need to choose an appropriate
perturbation space. Next we have to verify that the modulation
parameters exist and depend smoothly on the solution $\psi(x,t)$ in
the vicinity of the black soliton.

Concerning the first point, we observe that the functional
\eqref{Lamdef} which serves as a basis for our analysis is invariant
under translations and gauge transformations, and we recall that
$\Lambda'(u_0) = 0$. Thus, if $\psi(x,t)$ is a solution of the NLS
equation \eqref{nls} of the form \eqref{decomposition2} with $u,v \in
H^2(\R)$, we have for each fixed $t \in \R$ the following expansion
\begin{equation}
\label{DeltaLambda2}
  \Lambda(\psi) - \Lambda(u_0) \,=\, \langle K_+ u, u \rangle_{L^2}
  + \langle K_- v, v \rangle_{L^2} + N(u,v),
\end{equation}
where $N(u,v)$ collects all terms that are at least cubic in $u$ and
$v$. However, unlike in the periodic case considered in the companion
paper \cite{GP}, the decomposition \eqref{DeltaLambda2} is not
sufficient to prove the orbital stability of the black
soliton. Indeed, the quadratic terms in \eqref{DeltaLambda2} are
nonnegative, but they are degenerate in the sense that they do not
control the $L^2(\mathbb{R})$ norm of $v$, as can be seen from the
lower bound \eqref{K-coercive}. This is due to the fact that the
operator $K_-$ has essential spectrum touching the origin, with
generalized eigenfunctions corresponding to slow modulations of the
phase of the black soliton. As is clear from the proof of
Lemma~\ref{lemma-soliton-2}, one cannot even prove that $v \in
L^\infty(\R)$ if we only know that $\langle K_- v, v \rangle_{L^2} <
\infty$. This in turn makes it impossible to control the nonlinearity
$N(u,v)$ in \eqref{DeltaLambda2} in terms of the quadratic part
$\langle K_+ u, u \rangle_{L^2} + \langle K_- v, v \rangle_{L^2}$.

There are good reasons to believe that the above problem is not just a
technical one, and that the $H^2$ topology for the perturbations $u,v$
is not appropriate to prove orbital stability of the black soliton.
Indeed, as is well known, the cubic NLS equation \eqref{nls} has a
family of travelling dark solitons $\psi_{\nu}(x,t)$ given by
\eqref{darksoliton}.  Rigorous results \cite{GS} and numerical
simulations indicate that a small, localized perturbation of the
black soliton $\psi_0$ can lead to the formation of a dark soliton
$\psi_{\nu}$ with a small nonzero speed $\nu$. If this happens, the
functions $u,v$ defined in \eqref{decomposition2} cannot stay bounded
in $L^2(\R)$ for all times, because $\psi_{\nu} - \psi_0 \notin
L^2(\R)$ if $\nu \neq 0$. Note, however, that the quantity
$|\psi_{\nu}| - |\psi_0|$ does belong to $L^2(\R)$ and decays
exponentially at infinity. This suggests that a particular combination
of $u,v$ may be controlled in $L^2(\R)$ for all times.

Following \cite{GS}, we introduce the auxiliary variable
\begin{equation}
\label{etadef}
  \eta \,=\, |u_0 + u + iv|^2 - |u_0|^2 \,=\, 2 u_0 u + u^2 + v^2,
\end{equation}
which allows us to control the perturbations of the modulus of the
black soliton $u_0$. The idea is now to consider perturbations $u,v$
for which $u_x, v_x \in H^1(\R)$, $\eta \in L^2(\R)$, and $u,v \in
L^2(-R,R)$ for some fixed $R \ge 1$. If $\psi = u_0 + u + iv$, this is
equivalent to requiring that $\psi \in X$, where $X$ is the function
space \eqref{Xdef}, or that $d_R(\psi,u_0) < \infty$, where
$d_R$ is the distance \eqref{distance}. Indeed, we have by definition
\begin{equation}
\label{distance2}
  d_R(\psi,u_0) \,=\, \|u_x + iv_x\|_{H^1(\R)} + \|\eta\|_{L^2(\R)}
  + \|u + iv\|_{L^2(-R,R)}.
\end{equation}
Note, however, that we do not assume any longer that $u,v$ are
square integrable at infinity. In particular, the perturbed
solutions we consider include dark solitons $\psi_{\nu}$
with nonzero speed $\nu$.

Now that we have defined a precise perturbation space, we can state
our first result showing the existence and the continuity of the
modulation parameters $\xi$ and $\theta$ in a neighborhood of the
orbit of the soliton profile $u_0$. The following statement is
very close in spirit to Proposition~2 in \cite{GS} or
Lemma~6.1 in \cite{GP}.

\begin{lemma}
\label{lemma-xith}
Fix any $R \ge 1$. There exists $\epsilon_0 > 0$ such that,
for any $\psi \in X$ satisfying
\begin{equation}
\label{inf2}
  \inf_{\xi, \theta \in \R} d_R\Bigl(e^{i \theta} \psi(\cdot + \xi),
  u_0\Bigr) \,\le\, \epsilon_0,
\end{equation}
there exist $\xi \in \R$ and $\theta \in \R/(2\pi\Z)$ such that
\begin{equation}
\label{decomp2}
  e^{i \theta} \psi(x + \xi) \,=\, u_0(x) + u(x) + i v(x), \quad x \in \R,
\end{equation}
where the real-valued functions $u$ and $v$ satisfy the orthogonality
conditions \eqref{projections2}. Moreover, the modulation parameters
$\xi \in \R$ and $\theta \in \R/(2\pi\Z)$ depend continuously on
$\psi$ in the topology defined by the distance \eqref{distance}.
\end{lemma}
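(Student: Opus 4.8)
The plan is to characterize the modulation parameters as the solution of a two--dimensional equation and to obtain them from the implicit function theorem. I would first reduce to the situation where $\psi$ is close to $u_0$ itself rather than to its whole orbit. By the hypothesis \eqref{inf2} there exist $\xi_0\in\R$ and $\theta_0\in\R/(2\pi\Z)$ such that $\tilde\psi:=e^{i\theta_0}\psi(\cdot+\xi_0)$ satisfies $d_R(\tilde\psi,u_0)\le 2\epsilon_0$. Since the group action $(\xi,\theta)\cdot\psi=e^{i\theta}\psi(\cdot+\xi)$ is abelian, it then suffices to produce small corrections $(\tilde\xi,\tilde\theta)$ for $\tilde\psi$ and set $\xi=\xi_0+\tilde\xi$, $\theta=\theta_0+\tilde\theta$.

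Next I would encode the orthogonality conditions \eqref{projections2} as the vanishing of the map $G=(G_1,G_2):\R\times(\R/(2\pi\Z))\times X\to\R^2$ given by
\[
  G_1(\xi,\theta;\psi)=\langle u_0',\,\re(e^{i\theta}\psi(\cdot+\xi))\rangle_{L^2},
  \qquad
  G_2(\xi,\theta;\psi)=\langle u_0'',\,\im(e^{i\theta}\psi(\cdot+\xi))\rangle_{L^2}.
\]
Writing $e^{i\theta}\psi(\cdot+\xi)=u_0+u+iv$ as in \eqref{decomp2}, the identity $\langle u_0',u_0\rangle_{L^2}=\half[u_0^2]_{-\infty}^{\infty}=0$ together with $\im(u_0)=0$ shows that $G(\xi,\theta;\psi)=0$ is exactly \eqref{projections2}. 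Both integrals converge absolutely because $u_0'$ and $u_0''$ decay exponentially while $|\psi|\in L^\infty(\R)$ for $\psi\in X$. At the base point $(0,0,u_0)$ one has $G=0$, and after the change of variable $y=x+\xi$ the parameters enter only through the exponentially decaying weights $u_0'(\cdot-\xi)$, $u_0''(\cdot-\xi)$ and the rotation $e^{i\theta}$, so differentiation under the integral is justified and $G$ is smooth in $(\xi,\theta)$. Using $\langle u_0',u_0\rangle_{L^2}=0$ and $\langle u_0'',u_0\rangle_{L^2}=-\|u_0'\|_{L^2}^2$, I find that $D_{(\xi,\theta)}G(0,0;u_0)=\mathrm{diag}(\|u_0'\|_{L^2}^2,\,-\|u_0'\|_{L^2}^2)$, which is invertible since $\|u_0'\|_{L^2}>0$.

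The delicate point, which I expect to be the main obstacle, is the continuous dependence of $G$ on $\psi$ in the weak topology defined by $d_R$, which controls neither $\psi-u_0$ in $L^2(\R)$ nor in $L^\infty(\R)$. Here I would exploit that $G$ is affine in $\psi$: for $w:=\psi_1-\psi_2$ the difference $G(\xi,\theta;\psi_1)-G(\xi,\theta;\psi_2)$ is the integral of $w$ against $u_0'(\cdot-\xi)$ and $u_0''(\cdot-\xi)$. Choosing $y_0\in(-R,R)$ with $|w(y_0)|\le(2R)^{-1/2}\|w\|_{L^2(-R,R)}$ and writing $w(y)=w(y_0)+\int_{y_0}^y w_x$, the Cauchy--Schwarz inequality gives the growth bound $|w(y)|\le C(1+|y|^{1/2})(\|w\|_{L^2(-R,R)}+\|w_x\|_{L^2})$. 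Integrating this against the exponentially decaying weights, uniformly for $\xi$ in a bounded set, yields $|G(\xi,\theta;\psi_1)-G(\xi,\theta;\psi_2)|\le C\,d_R(\psi_1,\psi_2)$. Thus $G$ is jointly continuous, with jointly continuous $(\xi,\theta)$--derivatives, and is in fact Lipschitz in $\psi$.

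Finally I would invoke the implicit function theorem with parameters. Since $G$ vanishes at $(0,0,u_0)$ with invertible $(\xi,\theta)$--Jacobian and has the continuity and differentiability just established, there is a ball around $(0,0)$ in which, for every $\psi$ with $d_R(\psi,u_0)$ small enough, the equation $G(\xi,\theta;\psi)=0$ has a unique solution $(\xi,\theta)$, depending continuously on $\psi$ in the $d_R$ topology. Applying this to $\tilde\psi$ from the first step, composing the resulting small corrections with $(\xi_0,\theta_0)$, and shrinking $\epsilon_0$ so that $2\epsilon_0$ lies inside this ball, produces the required $\xi\in\R$ and $\theta\in\R/(2\pi\Z)$ satisfying \eqref{decomp2} and \eqref{projections2}, together with their continuous dependence on $\psi$. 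This completes the proof.
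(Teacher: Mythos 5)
Your proposal is correct and follows essentially the same route as the paper: both encode the orthogonality conditions \eqref{projections2} as the zero set of a two-dimensional map, verify via the growth bound $|w(x)| \le C(1+|x|^{1/2})\,d_R$ that the pairings against the exponentially decaying weights $u_0'$, $u_0''$ are controlled by $d_R$, compute the invertible Jacobian $\mathrm{diag}(\|u_0'\|_{L^2}^2, -\|u_0'\|_{L^2}^2)$ up to small corrections, and conclude by an implicit/inverse function theorem argument. The only cosmetic difference is that the paper fixes $\psi$ and runs a quantitative inverse function argument in $(\xi,\theta)$ alone, while you treat $\psi$ as a parameter in a parametric implicit function theorem; the underlying estimates are identical.
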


\begin{proof}
It is sufficient to prove \eqref{decomp2} for all $\psi \in X$
such that $\epsilon := d_R(\psi,u_0)$ is sufficiently small. Given
such a $\psi \in X$, we consider the smooth function ${\bf f} : \R^2
\to \R^2$ defined by
\[
  {\bf f}(\xi,\theta) \,=\, \begin{pmatrix} \langle u_0'(\cdot
  - \xi), {\rm Re}(e^{i \theta} \psi) \rangle_{L^2} \\[1mm]
  \langle u_0''(\cdot - \xi), {\rm Im}(e^{i \theta} \psi)
  \rangle_{L^2} \end{pmatrix}, \qquad (\xi,\theta) \in \R^2.
\]
By construction, we have ${\bf f}(\xi,\theta) = {\bf 0}$ if and only if
$\psi$ can be represented as in \eqref{decomp2} for some
real-valued functions $u,v$ satisfying the orthogonality conditions
\eqref{projections2}.

If we decompose $\psi = u_0 + u + iv$ where $u,v$ are real-valued,
we have $\langle u_0', {\rm Re}(\psi)\rangle_{L^2} = \langle u_0',
u\rangle_{L^2}$ because $\langle u_0',u_0\rangle_{L^2} = 0$. As in
the proof of Lemma~\ref{lemma-soliton-2}, we observe that
\[
  |u(x)| \,\le\, C\Bigl(\|u\|_{L^2(-1,1)} + (1+|x|^{1/2})\|u_x\|_{L^2(\R)}
  \Bigr) \,\le\, C(1+|x|^{1/2})d_R(\psi,u_0),
\]
where in the last inequality we have used \eqref{distance2}.
Thus $|\langle u_0', {\rm Re}(\psi)\rangle_{L^2}| \le C d_R(\psi,u_0)$,
and a similar argument gives $|\langle u_0'', {\rm Im}(\psi)
\rangle_{L^2}| \le C d_R(\psi,u_0)$. This shows that $\|{\bf f}
(0,0) \| \le C \epsilon$ for some positive constant $C$
independent of $\epsilon$.

On the other hand, the Jacobian matrix of the function ${\bf f}$
at the origin $(0,0)$ is given by
\[
  D {\bf f}(0,0) \,=\,
  \begin{pmatrix} \|u_0'\|_{L^2}^2 & 0 \\ 0 & -\|u_0'\|_{L^2}^2
  \end{pmatrix} \,+\, \begin{pmatrix} -\langle u_0'',{\rm Re}(\psi - u_0)
  \rangle_{L^2} & -\langle u_0', {\rm Im}(\psi - u_0)\rangle_{L^2} \\[.5mm]
  -\langle u_0''', {\rm Im}(\psi- u_0)\rangle_{L^2} &
  \langle u_0'', {\rm Re}(\psi - u_0)\rangle_{L^2} \end{pmatrix}.
\]
The first term in the right-hand side is a fixed invertible matrix and
the second term is bounded in norm by $C\epsilon$, hence $D {\bf
  f}(0,0)$ is invertible if $\epsilon$ is small enough.  In addition,
the norm of the inverse of $D {\bf f}(0,0)$ is bounded by a constant
independent of $\epsilon$. Finally, it is straightforward to verify
that the second-order derivatives of ${\bf f}$ are uniformly bounded
when $\epsilon \le 1$. These observations together imply that there
exists a unique pair $(\xi,\theta)$, in a neighborhood of size
$\mathcal{O}(\epsilon)$ of the origin, such that ${\bf f}(\xi,\theta)
= {\bf 0}$. Thus the decomposition \eqref{decomposition2} holds for
these values of $(\xi,\theta)$. In addition, the above argument shows 
that the modulation parameters $\xi,\theta$ depend continuously on
$\psi \in X$ in the topology defined by the distance \eqref{distance}.
This concludes the proof.
\end{proof}

As was already mentioned, the Cauchy problem for the NLS equation
\eqref{nls} is globally well-posed in the space $X$ \cite{Zhidkov}.
If $\psi(\cdot,t)$ is a solution of \eqref{nls} in $X$ which stays for
all times in a neighborhood of the orbit of the black soliton, the
modulation parameters $\xi(t)$, $\theta(t)$ given by
the decomposition (\ref{decomposition2}) subject to the orthogonality 
conditions (\ref{projections2}) are continuous functions of time.
In fact, as in \cite[Lemma 6.3]{GP}, we have the following
stronger conclusion:

\begin{lemma}
\label{difflem}
If $\epsilon > 0$ is sufficiently small, and if $\psi(\cdot,t)$ is
any solution of the NLS equation \eqref{nls} satisfying estimate
\eqref{bound-final} for all $t \in \R$, then the modulation parameters
$\xi(t),\theta(t)$ in the decomposition (\ref{decomposition2}) subject to 
(\ref{projections2}) are continuously
differentiable functions of $t$ satisfying \eqref{bound-time-per}.
\end{lemma}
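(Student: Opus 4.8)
The plan is to view the modulation parameters as being implicitly defined by the orthogonality conditions \eqref{projections2} and to differentiate these relations in time, using the NLS equation \eqref{nls} to express the time derivatives. Write $\phi(x,t) = e^{i(t+\theta(t))}\psi(x+\xi(t),t) = u_0(x) + u(x,t) + iv(x,t)$ as in \eqref{decomposition2}. A direct computation using \eqref{nls} shows that $\phi$ solves the modulated equation
\[
  \partial_t\phi \,=\, i\bigl(\phi_{xx} - |\phi|^2\phi + \phi\bigr)
  + i\dot\theta\,\phi + \dot\xi\,\phi_x,
\]
and since $u_0$ is a stationary solution (i.e.\ $u_0'' + u_0 - u_0^3 = 0$ by \eqref{wave}), linearizing around $u_0$ and separating real and imaginary parts yields evolution equations of the schematic form
\[
  u_t \,=\, L_- v + \dot\xi\,(u_0' + u_x) - \dot\theta\,v + \mathcal{R}_1,
  \qquad
  v_t \,=\, -L_+ u + \dot\theta\,(u_0 + u) + \dot\xi\,v_x + \mathcal{R}_2,
\]
where $L_\pm$ are the operators \eqref{operatorsdef} and $\mathcal{R}_1,\mathcal{R}_2$ collect the terms that are at least quadratic in $(u,v)$.

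Next I would differentiate the orthogonality conditions \eqref{projections2}. From $\langle u_0', u(\cdot,t)\rangle_{L^2}=0$ and $\langle u_0'', v(\cdot,t)\rangle_{L^2}=0$ one gets $\langle u_0', u_t\rangle_{L^2}=0$ and $\langle u_0'', v_t\rangle_{L^2}=0$, and inserting the evolution equations produces a $2\times 2$ linear system for $(\dot\xi,\dot\theta)$. The crucial point is that, after integrating by parts, every coefficient is a pairing of one of the exponentially localized functions $u_0'$, $u_0''$, $L_-u_0' = -2u_0^2 u_0'$, $L_+u_0''$ against $u, v, u_x, v_x$, together with the two ``diagonal'' contributions $\langle u_0', u_0'\rangle_{L^2} = \|u_0'\|_{L^2}^2$ and $\langle u_0'', u_0\rangle_{L^2} = -\|u_0'\|_{L^2}^2$. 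Since the localized pairings are bounded by $C\,d_R(\phi,u_0) \le C\epsilon$ by \eqref{distance2} (the exponential decay of the test functions dominating the at most $|x|^{1/2}$ growth of $u,v$ controlled as in Lemma~\ref{lemma-soliton-2}), and the remainders $\mathcal{R}_1,\mathcal{R}_2$ contribute only at order $\epsilon^2$, the system reads
\[
  \Bigl(\mathrm{diag}\bigl(\|u_0'\|_{L^2}^2,\, -\|u_0'\|_{L^2}^2\bigr) + \mathcal{O}(\epsilon)\Bigr)
  \begin{pmatrix}\dot\xi\\\dot\theta\end{pmatrix} \,=\, \mathcal{O}(\epsilon).
\]
For $\epsilon$ small the matrix is invertible with uniformly bounded inverse, so $|\dot\xi| + |\dot\theta| \le C\epsilon$, which is exactly \eqref{bound-time-per}.

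Finally I would justify the differentiability itself, which I expect to be the \emph{main obstacle}. The difficulty is that the solution $\psi(\cdot,t)$ furnished by Zhidkov's theory is only continuous in $t$ with values in $X$, so its time derivative $\psi_t = i(\psi_{xx}-|\psi|^2\psi)$ lives in a weaker space and the formal manipulations above are not a priori licit. The remedy is to apply the implicit function theorem to the map ${\bf f}(\xi,\theta,t)$ introduced in the proof of Lemma~\ref{lemma-xith}, regarded now as a function of the three variables $(\xi,\theta,t)$. Smoothness in $(\xi,\theta)$ and invertibility of $D_{(\xi,\theta)}{\bf f}$ near the soliton are already established there, so what remains is the $C^1$ dependence on $t$. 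This is obtained by computing $\partial_t{\bf f}$ via \eqref{nls} and integrating by parts to transfer all spatial derivatives onto the smooth, exponentially decaying test functions $u_0', u_0''$ (producing $u_0''', u_0''''$), so that the resulting expressions only pair the bounded functions $\psi$ and $|\psi|^2\psi$ against rapidly decaying kernels. These pairings are continuous in $(\xi,\theta,t)$ because $\psi \in C(\R, X)$, which shows that ${\bf f}$ is jointly $C^1$. The implicit function theorem then gives $\xi,\theta\in C^1(\R)$ together with the identity $(\dot\xi,\dot\theta)^\top = -(D_{(\xi,\theta)}{\bf f})^{-1}\,\partial_t{\bf f}$, whose right-hand side coincides with the solution of the linear system above; this both confirms the bound \eqref{bound-time-per} and closes the argument, exactly as in \cite[Lemma~6.3]{GP}.
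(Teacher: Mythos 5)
Your proposal is correct, and its computational core coincides with the paper's: the modulated equation for $\phi = e^{i(t+\theta)}\psi(\cdot+\xi,t)$, the linearized system for $(u,v)$ involving $L_\pm$, the $2\times 2$ linear system obtained by pairing the two equations with $u_0'$ and $u_0''$, and the conclusion $|\dot\xi|+|\dot\theta| \le C\epsilon$ are exactly \eqref{Bsys}--\eqref{BBdef} (your diagonal $\mathrm{diag}(\|u_0'\|_{L^2}^2,\,-\|u_0'\|_{L^2}^2)$ differs from the paper's $\mathrm{diag}(-\|u_0'\|_{L^2}^2,\,-\|u_0'\|_{L^2}^2)$ only by the sign convention in arranging the first row; both are uniformly invertible for small $\epsilon$). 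Where you genuinely diverge is on the point you correctly single out as the main obstacle: justifying differentiability when the Zhidkov solution is merely continuous in $t$ with values in $X$. The paper regularizes: it first works with solutions in the smoother space $Y$ (an $H^4$-type space), for which differentiating \eqref{decomposition2} is licit, derives \eqref{Bsys}, notes that after inverting $B$ the right-hand side makes sense and is continuous in $t$ for any $X$-solution satisfying \eqref{bound-final}, and concludes by density of $Y$ in $X$ via a standard approximation argument. You instead apply the implicit function theorem to ${\bf f}(\xi,\theta,t)$, computing $\partial_t{\bf f}$ through the weak form of \eqref{nls} after throwing all spatial derivatives onto the exponentially decaying test functions $u_0'$, $u_0''$. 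Both routes are valid. Yours is arguably more self-contained, since it avoids the (implicit) need for persistence of higher regularity in $Y$ and for convergence of the approximating modulation parameters; the price is that joint $C^1$ smoothness of ${\bf f}$ must be verified directly at the $X$ level of regularity. To make that step airtight you should record two small points: first, the formula for $\partial_t{\bf f}$ follows from the distributional form of \eqref{nls} integrated in time against the decaying (not compactly supported) kernels $u_0'(\cdot-\xi)$, $u_0''(\cdot-\xi)$, which requires a routine truncation argument; second, the local $C^1$ branch produced by the implicit function theorem must be identified with the globally defined continuous modulation parameters of Lemma~\ref{lemma-xith} by local uniqueness, after which the local pieces patch into $C^1$ regularity on all of $\R$. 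Neither point is a gap, but both deserve a sentence in a final write-up.
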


\begin{proof}
If $\psi(\cdot,t)$ is any solution of the NLS equation \eqref{nls}
in $X$, we know from \cite{Gerard,Zhidkov} that $t \mapsto
\psi(\cdot,t)$ is continuous in the topology defined by the
distance \eqref{distance}. Thus, if estimate \eqref{bound-final}  holds
for all $t \in \mathbb{R}$, Lemma~\ref{lemma-xith} shows that $\psi(\cdot,t)$
can be decomposed as in \eqref{decomposition2} with modulation
parameters $\xi(t),\theta(t)$ that depend continuously on time.
To prove differentiability, we first consider more regular solutions
for which $\psi(\cdot,t) \in Y$, where
\[
  Y \,=\, \Bigl\{\psi \in H^4_{\rm loc}(\R)\,: \quad \psi_x \in
  H^3(\R), ~1 - |\psi|^2 \in L^2(\R) \Bigr\}.
\]
For such solutions, it is not difficult to verify (by inspecting the
proof of Lemma~\ref{lemma-xith}) that the modulation parameters are
$C^1$ functions of time, so that we can differentiate both sides of
\eqref{decomposition2} and obtain from (\ref{nls}) the evolution
system
\[
  \left\{\!\!\begin{array}{l}
  ~\,\,u_t \,=\, L_- v + \dot{\xi} (u_0' + u_x) - \dot{\theta} v +
  (2 u_0 u + u^2 + v^2) v, \\
  -v_t \,=\, L_+ u -\dot{\xi} v_x - \dot{\theta} (u_0 + u) +
  (3 u_0 u + u^2 + v^2) u + u_0 v^2, \end{array} \right.
\]
where the operators $L_\pm$ are defined in \eqref{operatorsdef}.
Using the orthogonality conditions \eqref{projections2}, we
eliminate the time derivatives $u_t, v_t$ by taking the scalar
product of the first line with $u_0'$ and of the second line with
$u_0''$. This gives the following linear system for the derivatives
$\dot{\xi}$ and $\dot{\theta}$:
\begin{equation}
\label{Bsys}
  B \begin{pmatrix} \dot{\xi} \\[.5mm] \dot{\theta} \end{pmatrix}
  \,=\, \begin{pmatrix} \langle L_- u_0', v \rangle_{L^2} \\[.5mm]
  \langle L_+ u_0'', u \rangle_{L^2} \end{pmatrix} \,+\,
\begin{pmatrix}
  \langle u_0', (2 u_0 u + u^2 + v^2) v \rangle_{L^2} \\[.5mm]
  \langle u_0'', (3 u_0 u + u^2 + v^2) u + u_0 v^2 \rangle_{L^2}
\end{pmatrix},
\end{equation}
where
\begin{equation}
\label{BBdef}
  B \,=\, \begin{pmatrix} -\| u_0' \|^2_{L^2} & 0 \\ 0 &
  -\|u_0'\|^2_{L^2}\end{pmatrix} \,+\,
  \begin{pmatrix} -\langle u_0', u_x \rangle_{L^2} & \langle u_0', v
  \rangle_{L^2} \\[.5mm] \langle u_0'', v_x \rangle_{L^2} &
  \langle u_0'', u\rangle_{L^2} \end{pmatrix}.
\end{equation}
As in the proof of Lemma~\ref{lemma-xith}, it is easy to verify using
\eqref{bound-final} that the second term in the right-hand side of
\eqref{BBdef} is bounded by $C \epsilon$ for some positive constant
$C$, hence the matrix $B$ is invertible if $\epsilon$ is small
enough. Inverting $B$ in \eqref{Bsys}, we obtain a formula for the
derivatives $\dot{\xi},\dot{\theta}$ in which the right-hand side
makes sense (and is a continuous function of time) for any solution
$\psi(\cdot,t) \in X$ of \eqref{nls} satisfying \eqref{bound-final}
for all times. Since $Y$ is dense in $X$, we conclude by a standard
approximation argument that the modulation parameters
$\xi(t),\theta(t)$ are $C^1$ functions of time in the general case,
and that their derivatives satisfy \eqref{Bsys}.  Finally, the first
term in the right-hand side of \eqref{Bsys} is of size
$\mathcal{O}(\epsilon)$, whereas the second term is
$\mathcal{O}(\epsilon^2)$, hence $|\dot{\xi}(t)| + |\dot{\theta}(t)|
\le C\epsilon$ for all $t \in \R$, where the positive constant $C$ is
independent of $t$. This concludes the proof.
\end{proof}

\section{Proof of orbital stability of the black soliton}
\label{sec:stability}

This final section is entirely devoted to the proof of
Theorem~\ref{theorem-soliton}. As in the previous section, we consider
solutions of the NLS equation \eqref{nls} of the form
\eqref{decomposition2}, where the real-valued perturbations $u,v$
satisfy the orthogonality conditions \eqref{projections2}. Our main
task is a detailed analysis of the functional \eqref{Lamdef} in a
neighborhood of the orbit of the soliton profile $u_0$. Instead of
using the straightforward decomposition \eqref{DeltaLambda2}, the main
idea is to express the difference $\Lambda(\psi) - \Lambda(u_0)$ in
terms of the variables $u$, $v$, and $\eta$, where $\eta$ is
defined in \eqref{etadef}.

\begin{lemma}
\label{lemma-soliton-3}
If $\psi = u_0 + u + iv$ satisfies $d_R(\psi,u_0) < \infty$, then
\begin{align}\nonumber
  \Lambda(\psi) - \Lambda(u_0)
  \,=\, \int_{\R} \Bigl(&u_{xx}^2 + v_{xx}^2 + (3u_0^2-2)
  (u_x^2 + v_x^2) + (1-u_0^2)(u^2+v^2) \\ \label{Lamexp}
  &-3(1-u_0^2)(1-3u_0^2)u^2 + \frac12 \eta_x^2 + \frac12(3u_0^2-2)
  \eta^2\\ \nonumber
  &+\frac12 \eta^3 + 3\eta(u_x^2 + v_x^2) + 6u_0'(u^2+v^2)u_x\Bigr)\dd x~.
\end{align}
\end{lemma}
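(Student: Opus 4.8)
The plan is to prove \eqref{Lamexp} as an exact identity, by inserting $\psi = u_0 + u + iv$ directly into the definitions \eqref{energy} and \eqref{Sdef} of $E$ and $S$, expanding, and then repackaging the outcome with the help of the variable $\eta$ from \eqref{etadef}. The whole computation hinges on two elementary observations. Since $|\psi|^2 = (u_0+u)^2 + v^2 = u_0^2 + \eta$ by the very definition of $\eta$, we have $1-|\psi|^2 = (1-u_0^2) - \eta$, so that every occurrence of the modulus in \eqref{energy} and \eqref{Sdef} becomes a polynomial in $\eta$ with coefficients depending on $u_0$ only. Moreover $\bar\psi\psi_x + \psi\bar\psi_x = \partial_x|\psi|^2 = 2u_0u_0' + \eta_x$, so the third term of \eqref{Sdef} equals $\half(2u_0u_0'+\eta_x)^2$ and immediately produces the contribution $\half\eta_x^2$ appearing in \eqref{Lamexp}, together with terms that are independent of, or linear in, the perturbation.

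I would first dispose of the energy, which is quick. Writing $\psi_x = u_0'+u_x+iv_x$ and using $1-|\psi|^2 = (1-u_0^2)-\eta$, one finds
\[
  E(\psi) - E(u_0) \,=\, \int_\R\Bigl(2u_0'u_x + u_x^2 + v_x^2 - (1-u_0^2)\eta + \half\eta^2\Bigr)\dd x.
\]
Integrating the first term by parts and using $u_0'' = u_0^3-u_0 = -u_0(1-u_0^2)$ from \eqref{wave}, one checks that the part of $E(\psi)-E(u_0)$ linear in $u$ cancels, consistently with $E'(u_0)=0$. Since $\Lambda = S - 2E$, it then remains to treat the four terms of $S(\psi)-S(u_0)$ and to combine them with $-2\bigl(E(\psi)-E(u_0)\bigr)$.

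In $S$, the modulus term $(1-|\psi|^2)^2(1+\half|\psi|^2)$ and the term $\half(\bar\psi\psi_x+\psi\bar\psi_x)^2$ are handled entirely by the two identities above, expanding into powers of $\eta$ and $\eta_x$. The laborious terms are $|\psi_{xx}|^2 = (u_0''+u_{xx})^2 + v_{xx}^2$ and $3|\psi|^2|\psi_x|^2 = 3(u_0^2+\eta)\bigl((u_0'+u_x)^2 + v_x^2\bigr)$, which generate numerous cross-terms such as $u_0''u_{xx}$, $u_0^2u_0'u_x$ and $\eta\,u_0'u_x$, together with genuinely cubic and quartic contributions. The strategy for these is to integrate by parts so as to move all derivatives off $u_0$ (the boundary terms vanishing thanks to the exponential decay of $u_0',u_0''$, even though $u,v$ need not lie in $L^2$ under $d_R(\psi,u_0)<\infty$), and to reduce every coefficient to a polynomial in $u_0$ using the relations $u_0' = \frac{1}{\sqrt{2}}(1-u_0^2)$, $u_0''=u_0^3-u_0$ and $u_0'''=(3u_0^2-1)u_0'$ derived from \eqref{wave}.

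The main obstacle is purely organizational: correctly collecting a large number of cross- and higher-order terms and recognizing which combinations reassemble into $\half(3u_0^2-2)\eta^2$, $\half\eta^3$, $3\eta(u_x^2+v_x^2)$ and $6u_0'(u^2+v^2)u_x$. Two consistency checks both guide and validate the bookkeeping. First, every term strictly linear in $(u,v)$ must disappear, since $\Lambda'(u_0)=0$. Second, the purely quadratic part of \eqref{Lamexp}, obtained by replacing $\eta$ by its leading term $2u_0u$ and discarding the explicitly cubic terms, must coincide with $\langle K_+ u,u\rangle + \langle K_- v,v\rangle$ as written in \eqref{Kquad+} and \eqref{Kquad-}: for instance the coefficient $3u_0^2-2$ of $u_x^2$ combines with the $2u_0^2$ coming from $\half\eta_x^2$ to give the expected $5u_0^2-2$, while the cross term $4u_0u_0'\,uu_x$ produced by $\half\eta_x^2$ integrates by parts, via $(u_0u_0')'=\half(1-u_0^2)(1-3u_0^2)$, into the pure $u^2$ terms and reconstructs the coefficient $9u_0^2-5u_0^4-2$ of \eqref{Kquad+}. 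Once these checks are in place, the residual cubic and quartic terms collapse to exactly those displayed in the last line of \eqref{Lamexp}.
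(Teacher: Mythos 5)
Your proposal is correct and follows essentially the same route as the paper: both insert $\psi = u_0+u+iv$ into $\Lambda = S-2E$, use the two identities $|\psi|^2 = u_0^2+\eta$ and $\bar\psi\psi_x+\psi\bar\psi_x = 2u_0u_0'+\eta_x$, and remove the terms linear in the perturbation by integrating by parts with the ODE relations \eqref{wave} (the paper merely combines $S-2E$ into a single integrand before expanding, instead of treating $E$ and $S$ separately as you do). Your consistency checks --- cancellation of the linear terms and agreement of the quadratic part, after replacing $\eta$ by $2u_0u$ and integrating $4u_0u_0'uu_x$ by parts, with \eqref{Kquad+}--\eqref{Kquad-} --- are accurate and correspond exactly to the structure of the paper's computation.
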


\begin{proof}
We observe that $|\psi|^2 = u_0^2 + \eta$ and $\bar \psi \psi_x +
\psi \bar \psi_x = 2u_0 u_0' + \eta_x$. Thus, if
$$
  A(\psi) \,=\, |\psi_{xx}|^2 + |\psi_x|^2 (3 |\psi|^2 -2) + \frac{1}{2}
 (\bar{\psi} \psi_x + \psi \bar{\psi}_x)^2 + \frac{1}{2} |\psi|^2
 (1 - |\psi|^2)^2
$$
denotes the integrand in the functional $\Lambda = S - 2 E$, a direct
calculation shows that
\begin{align}\nonumber
  A(\psi) - A(u_0) \,=~ &\mathcal{L}(u,\eta) + 6\eta u_0' u_x
  + u_{xx}^2 + v_{xx}^2 + (3u_0^2-2)(u_x^2 + v_x^2) \\ \label{Aexp}
  &+ \frac12 \eta_x^2 + \frac12(3u_0^2-2)\eta^2 + \frac12 \eta^3
  + 3\eta(u_x^2 + v_x^2),
\end{align}
where $\mathcal{L}(u,\eta) = 2u_0''u_{xx} + 2(3u_0^2-2)u_0'u_x
+ 2u_0u_0'\eta_x + \eta(1-u_0^2)(2-3u_0^2)$. We now integrate
the right-hand side of \eqref{Aexp} over $x \in \R$, starting with
the terms $\mathcal{L}(u,\eta)$ which are linear in $u$ and $\eta$.
Using the identities $u_0'' + u_0 - u_0^3 = 0$ and $u_0'''' +
(1-3u_0^2)u_0'' -6u_0 u_0'^2 = 0$, we find
\begin{align*}
  2\int_\R \Bigl(u_0''u_{xx} + (3u_0^2-2)u_0'u_x\Bigr)\dd x \,&=\,
  2\int_\R \Bigl(u_0'''' - (3u_0^2-2)u_0'' - 6u_0 u_0'^2\Bigr)u\dd x \\
  \,&=\, 2\int_\R u_0'' u \dd x \,=\, -2\int_\R (1-u_0^2)u_0 u \dd x.
\end{align*}
Similarly, as $2(u_0 u_0')' = (1-u_0^2)(1-3u_0^2)$, we have
$$
  2\int_\R u_0 u_0' \eta_x \dd x = -2\int_R (u_0 u_0')'\eta \dd x =
  - \int_\R (1-u_0^2)(1-3u_0^2)\eta d x.
$$
We conclude that
\begin{equation}
\label{Lterms}
  \int_\R \mathcal{L}(u,\eta)\dd x \,=\, \int_\R (1-u_0^2)
  (\eta - 2u_0 u)\dd x \,=\, \int_\R (1-u_0^2)(u^2+v^2)\dd x.
\end{equation}
Note that \eqref{Lterms} is now quadratic in $u$ and $v$, which could
be expected since $u_0$ is a critical point of the functional
$\Lambda$.  We next consider the quadratic term $6\eta u_0' u_x$ in
\eqref{Aexp}, which has no definite sign. Using the representation
\eqref{etadef}, we find $6\eta u_0' u_x = 12 u_0 u_0' u u_x + 6 u_0'
(u^2+v^2)u_x$, and integrating by parts, we obtain
\begin{equation}
\label{Qterm}
  6\int_\R \eta u_0' u_x\dd x \,=\, -3\int_\R (1-u_0^2)(1-3u_0^2)u^2
  \dd x + 6 \int_\R u_0' (u^2+v^2)u_x \dd x.
\end{equation}
Now, combining \eqref{Aexp}, \eqref{Lterms}, and \eqref{Qterm},
we arrive at \eqref{Lamexp}.
\end{proof}

To simplify the notations, we define
\begin{align}\nonumber
  B_0(u) \,&=\, u_{xx}^2 + (5u_0^2-2)u_x^2 - (1-3u_0^2)u^2
    - (1-u_0^2)(1-5u_0^2)u^2\\ \label{Bdef}
  B_1(u) \,&=\, u_{xx}^2 + (3u_0^2-2)u_x^2 + (1-u_0^2)u^2
    - 3 (1-u_0^2)(1-3u_0^2)u^2\\ \nonumber
  B_2(v) \,&=\, v_{xx}^2 + (3u_0^2-2)v_x^2 + (1-u_0^2)v^2\\ \nonumber
  B_3(\eta) \,&=\, {\TS \frac12\eta_x^2 + \frac12(3u_0^2-2)\eta^2}.
\end{align}
The quadratic terms in the right-hand side of \eqref{Lamexp}
can be written in the compact form
\begin{equation}
\label{Qdef}
  Q(u,v,\eta) = \int_\R \Bigl(B_1(u) + B_2(v) + B_3(\eta)\Bigr)\dd x.
\end{equation}
We see that $Q(u,v,\eta)$ contains $\langle K_-v,v\rangle \equiv
\int_\R B_2(v)\dd x$, but not $\langle K_+u,u\rangle \equiv \int_\R
B_0(u)\dd x$. Instead, it only contains $\int_\R B_1(u)\dd x$ and
$\int_\R B_3(\eta)\dd x$. This discrepancy is due to that fact that
the variables $u$ and $\eta$ are not independent. As $\eta = 2 u_0 u +
u^2 + v^2$, the quantity $\int_\R B_3(\eta)\dd x$ also contains
quadratic terms in $u$ and $u_x$, which should be added to $\int_{\R}
B_1(u) dx$ to obtain $\int_{\R} B_0(u) dx$.

Due to the relation between $u$ and $\eta$, it is not obvious that
each quadratic term in \eqref{Qdef} is positive independently of the
others. To avoid that difficulty, we fix some $R \ge 1$ (which will be
chosen large enough below) and we split the integration domain into
two regions. When $|x| \le R$, we replace $\eta$ by $2 u_0 u + u^2 +
v^2$, and we use extensions of Lemmas \ref{lemma-soliton-1} and
\ref{lemma-soliton-2} to prove positivity of the quadratic terms in
\eqref{Qdef}. In the outer region $|x| > R$, the analysis is much
simpler, because the expressions $B_1(u)$, $B_2(v)$, and $B_3(\eta)$
are obviously positive if $R$ is large enough.

Since $\eta$ is a nonlinear function of $u$ and $v$, the analysis
of the quadratic expression \eqref{Qdef} will produce higher-order
terms, which will be controlled using a smallness assumption on
the distance $d_R(\psi,u_0)$. To that purpose, we find it convenient
to introduce the quantity
\begin{equation}
\label{rhodef}
  \rho^2(u,v,\eta) \,=\, \int_\R \Bigl(u_{xx}^2 + v_{xx}^2 + u_x^2 +
  v_x^2 \Bigr)\dd x + \int_{|x|\le R} \Bigl(u^2 + R^{-2}v^2\Bigr)
  \dd x + \int_{|x|\ge R} \Bigl(\eta_x^2 + \eta^2\Bigr)\dd x,
\end{equation}
which is equivalent to the squared distance \eqref{distance2} in a
neighborhood of $u_0$. Indeed, we have the following elementary
result:

\begin{lemma}
\label{auxlem}
Fix $R \ge 1$, and assume that $\psi = u_0 + u + iv$, where
$u,v \in H^2_{\rm loc}(\R)$ are real-valued.
Let $d_R(\psi,u_0)$ be given by \eqref{distance2} and
$\rho(u,v,\eta)$ by \eqref{rhodef}.
\\[1mm]
{\bf a)} One has  $d_R(\psi,u_0) < \infty$ if and only if
$\rho(u,v,\eta) < \infty$.\\[1mm]
{\bf b)} There exists a constant $C_0 \ge 1$ (independent of $R$)
such that, if $d_R(\psi,u_0) \le 1$ or if\\
\null\hspace{6mm}$R^{1/2}\rho(u,v,\eta) \le 1$, then
\begin{equation}
\label{rhoequiv}
  C_0^{-1} \rho(u,v,\eta) \le d_R(\psi,u_0) \le C_0 R \rho(u,v,\eta).
\end{equation}
\end{lemma}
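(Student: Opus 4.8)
The plan is to exploit the fact that both $d_R(\psi,u_0)^2$ and $\rho(u,v,\eta)^2$ contain the same ``derivative part'' $\int_\R(u_{xx}^2+v_{xx}^2+u_x^2+v_x^2)\dd x = \|u_x+iv_x\|_{H^1}^2$, so that the comparison reduces to the remaining terms. The only genuine differences are that $d_R$ controls $\eta$ in $L^2(\R)$ and $u,v$ in $L^2(-R,R)$, whereas $\rho$ controls $\eta$ together with $\eta_x$ only on the outer region $|x|\ge R$, and replaces $\|v\|_{L^2(-R,R)}$ by the weaker quantity $R^{-1}\|v\|_{L^2(-R,R)}$. The relation $\eta=2u_0u+u^2+v^2$ from \eqref{etadef} is what links the two.

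For part \textbf{a)} I would argue as follows. If $d_R(\psi,u_0)<\infty$ then $\psi\in X$, so by the discussion following \eqref{Xdef} we have $|\psi|\in L^\infty(\R)$ and hence $u,v\in L^\infty(\R)$. Writing $\eta_x=2u_0'u+2(u_0+u)u_x+2vv_x$, every term is then a product of a bounded function with an $L^2$ function (the first even carries the exponential decay of $u_0'$), so $\eta_x\in L^2(\R)$; together with $\eta\in L^2(\R)$ and the control of $u,v$ on $(-R,R)$ this gives $\rho<\infty$. Conversely, if $\rho<\infty$ then $u,v\in H^2_{\rm loc}$ are continuous, hence bounded on the compact interval $[-R,R]$, so $\eta\in L^2(-R,R)$; adding the outer bound $\|\eta\|_{L^2(|x|\ge R)}<\infty$ gives $\eta\in L^2(\R)$, and since $R$ is fixed we recover $\|v\|_{L^2(-R,R)}\le R\,(R^{-1}\|v\|_{L^2(-R,R)})<\infty$, whence $d_R<\infty$.

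For part \textbf{b)} I would prove the two inequalities separately, comparing the respective integrands. The lower bound $\rho\le C_0 d_R$ is immediate for every term except $\|\eta_x\|_{L^2(|x|\ge R)}$, since $R\ge1$ makes the weights $R^{-1},R^{-2}$ harmless and the outer $\eta$ is dominated by $\eta$ on all of $\R$. For the $\eta_x$ term I would again use $\eta_x=2u_0'u+2u_0u_x+2uu_x+2vv_x$: the term $u_0'u$ is bounded by $Cd_R$ using the pointwise estimate $|u(x)|\le C(1+|x|^{1/2})d_R$ (as in the proofs of Lemmas~\ref{lemma-soliton-2} and \ref{lemma-xith}) against the exponential weight $u_0'^2$, while the remaining three terms are bounded by $C(1+\|u\|_{L^\infty}+\|v\|_{L^\infty})\|(u_x,v_x)\|_{L^2}\le Cd_R$ thanks to the global $L^\infty$ bound on $u,v$. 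The upper bound $d_R\le C_0 R\rho$ is where the factor $R$ enters: one has $\|v\|_{L^2(-R,R)}\le R\rho$ directly from the $R^{-2}$ weight, and the only remaining term is the inner piece $\|\eta\|_{L^2(-R,R)}$, which I would estimate by $2\|u\|_{L^2(-R,R)}+\|u\|_{L^4(-R,R)}^2+\|v\|_{L^4(-R,R)}^2$; the linear term is $\le2\rho$, and the quartic terms are controlled by Sobolev inequalities on the interval, the worst being $\|v\|_{L^4(-R,R)}^2\le\|v\|_{L^\infty(-R,R)}\|v\|_{L^2(-R,R)}$ with $\|v\|_{L^\infty(-R,R)}\le CR^{1/2}\rho$ and $\|v\|_{L^2(-R,R)}\le R\rho$.

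The main obstacle is the presence of the quadratic terms coming from $\eta=2u_0u+u^2+v^2$, together with the need to keep all constants independent of $R$. Two ingredients are essential. First, the global bound $u,v\in L^\infty(\R)$, which I would obtain from $\zeta:=1-|\psi|\in H^1(\R)$ exactly as in the discussion after \eqref{Xdef}; in the small regime $\|\zeta\|_{H^1}$ is $O(1)$ independent of $R$, so that the products $uu_x,vv_x$ are genuinely of the same size as $u_x,v_x$ in $L^2$. Second, the smallness hypothesis: under $d_R\le1$ the $L^\infty$ norms of $u,v$ are $O(1)$ and the quartic interval terms are of order $\|v\|_{L^2(-R,R)}\le R\rho$, whereas under $R^{1/2}\rho\le1$ one uses $\|v\|_{L^\infty(-R,R)}\le CR^{1/2}\rho$, so that $\|v\|_{L^4(-R,R)}^2\le CR^{3/2}\rho^2=C(R\rho)(R^{1/2}\rho)\le CR\rho$; in either case the quartic corrections are absorbed into $C_0R\rho$. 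I expect verifying that the two alternative smallness conditions both suffice, and that the $R$-dependence never exceeds one linear power, to be the delicate bookkeeping at the heart of the lemma, the rest being the routine term-by-term comparison sketched above.
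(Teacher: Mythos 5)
Your overall strategy (term-by-term comparison driven by $L^\infty$ bounds on $u,v$ and the relation $\eta = 2u_0u+u^2+v^2$) is the same as the paper's, and your argument is complete in the regime $d_R(\psi,u_0)\le 1$: there your derivation of the global bound $\|u\|_{L^\infty}+\|v\|_{L^\infty}\le C$ via $\zeta = 1-|\psi|\in H^1(\R)$ works, because $\|\zeta\|_{L^2}$ is controlled by $\|1-u_0^2\|_{L^2}+\|\eta\|_{L^2(\R)}\le C(1+d_R)$. Part a) is also fine.

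The genuine gap is in the other regime, where only $R^{1/2}\rho(u,v,\eta)\le 1$ is assumed. The lemma claims \emph{both} inequalities in \eqref{rhoequiv} under either hypothesis, and your lower bound $\rho \le C_0 d_R$ needs a global, $R$-uniform $L^\infty$ bound on $u,v$ (to handle $uu_x$ and $vv_x$ in $\eta_x$ on the region $|x|\ge R$, where $u,v$ need not be square integrable). But your only route to that bound is through $\|\zeta\|_{H^1(\R)}$, which requires $\|\eta\|_{L^2(\R)}=O(1)$, i.e.\ essentially $d_R=O(1)$. Under $R^{1/2}\rho\le 1$ alone, the best a priori information is $d_R\le C R\rho \le C R^{1/2}$ (your own upper bound), so this route only yields $\|\zeta\|_{H^1}\le CR^{1/2}$, hence $\|u\|_{L^\infty}+\|v\|_{L^\infty}\le C R^{1/4}$ and a lower bound $\rho\le CR^{1/4}d_R$ whose constant is not independent of $R$; your assertion that ``in the small regime $\|\zeta\|_{H^1}$ is $O(1)$ independent of $R$'' is unjustified precisely here. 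The missing idea (Step~2 of the paper's proof) is to extract the outer-region bound directly from $\rho$: since $\rho$ controls $\eta$ in $H^1(|x|\ge R)$, Sobolev embedding on the two half-lines gives $\|\eta\|_{L^\infty(|x|\ge R)}\le C\rho$, and then the identity $|\psi|^2=u_0^2+\eta$ yields $\|u\|_{L^\infty(|x|\ge R)}+\|v\|_{L^\infty(|x|\ge R)}\le C(1+\rho)^{1/2}$; combined with the interval Sobolev bounds $\|u\|_{L^\infty(-R,R)}\le C\rho$ and $\|v\|_{L^\infty(-R,R)}\le CR^{1/2}\rho$ (which you do use, but only for the quartic terms in the upper bound), this gives $\|u\|_{L^\infty(\R)}+\|v\|_{L^\infty(\R)}\le C(1+R^{1/2}\rho)\le C$, uniformly in $R$, and closes the argument.
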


\begin{proof}
Throughout the proof, we denote $d_R(\psi,u_0)$ by $d_R$ and
$\rho(u,v,\eta)$ simply by $\rho$. We proceed in three steps.

\smallskip
\noindent{\bf Step 1:} Assume first that $d_R < \infty$, so that
$u_x,v_x \in H^1(\R)$, $u,v \in L^2(-R,R)$, and $\eta \in L^2(\R)$,
where $\eta = |\psi|^2 - |u_0|^2 = 2 u_0 u + u^2 + v^2$. We claim
that $u,v \in L^\infty(\R)$ and that
\begin{equation}
\label{Kbound1}
  K \,:=\, \|u\|_{L^\infty(\R)} + \|v\|_{L^\infty(\R)} \,\le\, C(1 + d_R),
\end{equation}
for some universal constant $C > 0$. Indeed, if $f = |\psi| - |u_0|$,
we observe that
$$
  d_R^2 \,\ge\, \int_\R \eta^2 \dd x \,\ge\, \int_{|x|\ge 1}
  (|\psi| - |u_0|)^2(|\psi| + |u_0|)^2\dd x \ge C \int_{|x|\ge 1}
  f^2 \dd x,
$$
hence $f \in L^2(I)$, where $I = \{x \in \R : |x| \ge 1\}$, and
$\|f\|_{L^2(I)} \le C d_R$. Moreover, we have $|f_x| \le
2 u_0' + |u_x| + |v_x|$ almost everywhere, hence $f_x \in L^2(\R)$
and $\|f_x\|_{L^2(\R)} \le C (1+d_R)$. By Sobolev embedding,
this implies that $f \in L^\infty(I)$, hence also $u,v \in L^\infty(I)$,
and we have the bound $\|u\|_{L^\infty(I)} + \|v\|_{L^\infty(I)} \le
C (1+d_R)$. Finally, since $\|u_x\|_{L^2(\R)} + \|v_x\|_{L^2(\R)}
\le C d_R$, we conclude that $u,v \in L^\infty(\R)$ and that
\eqref{Kbound1} holds.

\smallskip
\noindent{\bf Step 2:} Next, we assume that $\rho < \infty$, so that
$u_x,v_x \in H^1(\R)$, $u,v \in L^2(-R,R)$, and $\eta \in H^1(I_R)$,
where $I_R = \{x \in \R : |x| \ge R\}$. We claim that $u,v \in
L^\infty(\R)$ and that
\begin{equation}
\label{Kbound2}
  K \,:=\, \|u\|_{L^\infty(\R)} + \|v\|_{L^\infty(\R)} \,\le\, C(1 + R^{1/2}\rho),
\end{equation}
for some universal constant $C > 0$. Indeed, we know that $\eta
\in L^\infty(I_R)$ with $\|\eta\|_{L^\infty(I_R)} \le C\rho$. This
implies that $\psi \in L^\infty(I_R)$, hence also $u,v \in L^\infty(I_R)$,
and that $\|u\|_{L^\infty(I_R)}+\|v\|_{L^\infty(I_R)} \le C(1+\rho)^{1/2}$.
On the other hand, we know that $\|u\|_{L^\infty(-R,R)} \le C
\|u\|_{H^1(-R,R)} \le C\rho$ and that
$$
  \|v\|_{L^\infty(-R,R)} \,\le\, C\biggl(\frac{\|v\|_{L^2(-R,R)}}{R^{1/2}}
  + \|v\|_{L^2(-R,R)}^{1/2}\|v_x\|_{L^2(-R,R)}^{1/2}\biggr)
  \,\le\, CR^{1/2}\rho,
$$
because $\|v\|_{L^2(-R,R)} \le R\rho$ and $\|v_x\|_{L^2(-R,R)} \le \rho$.
Thus we conclude that $u,v \in L^\infty(\R)$ and that \eqref{Kbound2}
holds.

\smallskip
\noindent{\bf Step 3:} Finally we assume that $K = \|u\|_{L^\infty(\R)} +
\|v\|_{L^\infty(\R)} < \infty$, which is the case if $d_R < \infty$ or
if $\rho < \infty$. As $\eta = 2u_0u + u^2 + v^2$, we find
$$
  \|\eta\|_{L^2(-R,R)} \,\le\, C(1+K) \Bigl(\|u\|_{L^2(-R,R)} +
  \|v\|_{L^2(-R,R)}\Bigr) \,\le\, C(1+K)R\rho,
$$
because $\|u\|_{L^2(-R,R)} \le \rho$ and $\|v\|_{L^2(-R,R)} \le R\rho$.
This shows that, if $\rho < \infty$, then $\eta \in L^2(\R)$, so
that $d_R < \infty$, and we have the bound $d_R \le C(1+K)R\rho$.
Conversely, since $\eta_x = 2(u_0' u + u_0 u_x + uu_x + vv_x)$, we obtain
$$
  \|\eta_x\|_{L^2(\R)} \,\le\, C(1+K) \Bigl(\|u\|_{L^2(-1,1)} +
  \|u_x\|_{L^2(\R)} +  \|v_x\|_{L^2(\R)}\Bigr) \,\le\, C(1+K)d_R,
$$
where to estimate $u_0' u$ we used the fact that $|u(x)| \le
C(\|u\|_{L^2(-1,1)} + (1+|x|)^{1/2}\|u_x\|_{L^2(\R)})$. This shows that,
if $d_R < \infty$, then $\eta_x \in L^2(\R)$, so that
$\rho < \infty$, and we have the bound $\rho \le C(1+K) d_R$.
This concludes the proof.
\end{proof}

In the calculations below, to avoid boundary terms when integrating by
parts in expressions such as \eqref{Qdef}, it is technically
convenient to split the integration domain using a smooth partition of
unity. Let $\chi : \R \to [0,1]$ be a smooth cut-off function such
that
$$
  \chi(x) \,=\, 1 \quad \mbox{\rm for} \quad |x| \le \frac{1}{2}\,,
  \qquad {\rm and} \qquad \chi(x) \,=\, 0 \quad \mbox{\rm for} \quad
  |x| \ge \frac{3}{2}\,.
$$
We further assume that $\chi$ is even, that $\chi'(x) \le 0$ for
$x \ge 0$, and that $\chi(1) = \frac{1}{2}$. Given $R \ge 1$, we denote
$\chi_R(x) = \chi(x/R)$. The following estimates will be useful
to control the functions $u,v$ on the support of $\chi_R'$.

\begin{lemma}
\label{auxlem2}
Fix $R \ge 1$, and assume that $\psi = u_0 + u + iv$ satisfies
$d_R(\psi,u_0) < \infty$. Then there exists a constant $C_1 > 0$
(independent of $R$) such that
\begin{align}
\label{locbound1}
  \|u\|_{L^2(-2R,2R)} \,&\le\, C_1 (\rho(u,v,\eta) +
  R^{3/2}\rho(u,v,\eta)^2), \\ \label{locbound2}
  \|u\|_{L^\infty(-2R,2R)} + \|v\|_{L^\infty(-2R,2R)} \,&\le\,
  C_1 R^{1/2}\rho(u,v,\eta),
\end{align}
where $\rho(u,v,\eta)$ is given by \eqref{rhodef}.
\end{lemma}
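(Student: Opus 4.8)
The plan is to treat the two estimates in turn, first establishing the $L^\infty$ bound \eqref{locbound2} and then deducing the $L^2$ bound \eqref{locbound1} from it. Throughout I abbreviate $\rho = \rho(u,v,\eta)$, and I recall from Lemma~\ref{auxlem}(a) that the hypothesis $d_R(\psi,u_0) < \infty$ is equivalent to $\rho < \infty$, so that all the quantities below are finite; in particular $u,v \in H^2_{\rm loc}(\R)$ are continuous, so their pointwise values are well defined. The definition \eqref{rhodef} gives directly $\|u_x\|_{L^2(\R)} + \|v_x\|_{L^2(\R)} \le 2\rho$, $\|u\|_{L^2(-R,R)} \le \rho$, and $\|v\|_{L^2(-R,R)} \le R\rho$.

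For \eqref{locbound2}, I would first control $u$ and $v$ on the core interval $(-R,R)$ by the scaling-correct Gagliardo--Nirenberg inequality on an interval of length $2R \ge 2$, namely $\|f\|_{L^\infty(-R,R)} \le C(R^{-1/2}\|f\|_{L^2(-R,R)} + \|f\|_{L^2(-R,R)}^{1/2}\|f_x\|_{L^2(-R,R)}^{1/2})$, exactly as in Step~2 of the proof of Lemma~\ref{auxlem}. Feeding in the three bounds above yields $\|u\|_{L^\infty(-R,R)} \le C\rho$ and $\|v\|_{L^\infty(-R,R)} \le CR^{1/2}\rho$. The point is then to propagate these bounds outward to the annular region $R \le |x| \le 2R$ by the fundamental theorem of calculus: for $x \in (R,2R)$ one writes $u(x) = u(R) + \int_R^x u_x\,\dd y$, so that $|u(x)| \le \|u\|_{L^\infty(-R,R)} + (x-R)^{1/2}\|u_x\|_{L^2(\R)} \le C\rho + R^{1/2}\rho \le CR^{1/2}\rho$, and similarly $|v(x)| \le CR^{1/2}\rho$; the region $-2R \le x \le -R$ is handled symmetrically. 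This gives \eqref{locbound2}. Note that this propagation argument is what allows us to avoid the spurious additive constant appearing in the global estimate \eqref{Kbound2}: far from the origin the phase of $\psi$ need not be close to that of $u_0$, but on the bounded set $(-2R,2R)$ the $H^1$ control of $u,v$ propagates the smallness from the core outward.

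For \eqref{locbound1}, the key observation is that $u_0$ is bounded away from zero outside the core: since $R \ge 1$ we have $|u_0(x)| \ge \tanh(1/\sqrt2) =: c_0 > 0$ for all $|x| \ge R$. On the annulus $R \le |x| \le 2R$ I then solve the algebraic relation \eqref{etadef}, $\eta = 2u_0 u + u^2 + v^2$, for $u$, obtaining $|u| \le (2c_0)^{-1}(|\eta| + u^2 + v^2)$. Taking $L^2$ norms over $R \le |x| \le 2R$, the linear term is bounded by $\|\eta\|_{L^2(\{|x|\ge R\})} \le \rho$, while the quadratic terms are estimated using \eqref{locbound2}: for instance $\|u^2\|_{L^2(R\le|x|\le2R)} \le (2R)^{1/2}\|u\|_{L^\infty(-2R,2R)}^2 \le CR^{3/2}\rho^2$, and likewise for $v^2$. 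Combining these with the direct bound $\|u\|_{L^2(-R,R)} \le \rho$ on the core yields $\|u\|_{L^2(-2R,2R)} \le C(\rho + R^{3/2}\rho^2)$, which is \eqref{locbound1}.

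The only genuinely delicate point is to track the powers of $R$ correctly, and in particular to obtain the $L^\infty$ bound \eqref{locbound2} with the factor $R^{1/2}$ and \emph{no} additive constant; this is precisely why one propagates from the core rather than relying on the $\eta$-based $L^\infty$ estimate of Lemma~\ref{auxlem}, and why the lower bound $|u_0| \ge c_0$ on $\{|x| \ge R\}$ is needed to invert the relation between $\eta$ and $u$ in the annular region. Everything else is a routine application of one-dimensional Sobolev interpolation and the fundamental theorem of calculus.
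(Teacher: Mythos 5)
Your proof is correct and follows essentially the same route as the paper: the $L^\infty$ bound \eqref{locbound2} comes from one-dimensional Sobolev/fundamental-theorem-of-calculus control propagated outward from the core interval $(-R,R)$, and the $L^2$ bound \eqref{locbound1} on the annulus $R \le |x| \le 2R$ is obtained by inverting the relation $\eta = 2u_0u + u^2 + v^2$ using that $|u_0|$ is bounded below there, with the quadratic terms estimated via \eqref{locbound2}. The only difference is cosmetic: the paper compresses your Gagliardo--Nirenberg-plus-propagation step into the single pointwise inequality $|f(x)| \le C\bigl(R^{-1/2}\|f\|_{L^2(-R,R)} + (|x|+R)^{1/2}\|f_x\|_{L^2(\R)}\bigr)$, which encodes exactly the same argument.
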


\begin{proof}
If $f$ is either $u$ or $v$, then $|f(x)| \le
C(R^{-1/2}\|f\|_{L^2(-R,R)} + (|x|+R)^{1/2}\|f_x\|_{L^2(\R)})$, and
this gives the bound \eqref{locbound2}. To prove estimate
\eqref{locbound1}, we recall that $\|u\|_{L^2(-R,R)} \le
\rho(u,v,\eta)$, so we only need to control $u(x)$ for $R \le |x| \le
2R$. In that region we have $|u| \le C(|\eta| + u^2 + v^2)$, hence
using the bound \eqref{locbound2} and the fact that $\|\eta\|_{L^2(|x|
\ge R)} \le \rho(u,v,\eta)$ we obtain the desired result.
\end{proof}

We now analyze the quadratic terms in the representation \eqref{Qdef}.

\begin{lemma}
\label{lemma-soliton-4}
Under the assumptions of Lemma~\ref{auxlem}, if $d_R(\psi,u_0)
\le 1$, we have
\begin{equation}\label{Bident}
  \int_\R \Bigl(B_1(u) + B_3(\eta)\Bigr)\chi_R(x)\dd x \,=\,
  \int_\R B_0(u)\chi_R(x)\dd x + \mathcal{O}(R^3\rho(u,v,\eta)^3 +
  e^{-R}\rho(u,v,\eta)^2),
\end{equation}
where the estimate in the big O term holds uniformly for $R \ge 1$.
\end{lemma}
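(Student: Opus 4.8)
The plan is to expand $B_3(\eta)$ using the algebraic splitting $\eta = \eta_1 + \eta_2$ from \eqref{etadef}, where $\eta_1 = 2u_0 u$ is linear and $\eta_2 = u^2 + v^2$ is quadratic in the perturbation $(u,v)$. Since $B_3$ is a quadratic form, this gives $B_3(\eta) = B_3(\eta_1) + \bigl[\eta_{1,x}\eta_{2,x} + (3u_0^2-2)\eta_1\eta_2\bigr] + B_3(\eta_2)$, in which the first term is quadratic, the bracket is cubic, and the last term is quartic in $(u,v)$. Accordingly I would split the proof into two parts: an exact quadratic identity showing that $\int_\R (B_1(u) + B_3(\eta_1))\chi_R \dd x$ reproduces $\int_\R B_0(u)\chi_R \dd x$ up to an exponentially small boundary term, and a set of remainder estimates bounding the cubic and quartic contributions by $\mathcal{O}(R^3\rho^3)$, where I abbreviate $\rho = \rho(u,v,\eta)$.

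For the quadratic identity, I would first compute $B_3(\eta_1) = 2u_0'^2 u^2 + 4u_0 u_0' u u_x + 2u_0^2 u_x^2 + 2(3u_0^2-2)u_0^2 u^2$ directly from $\eta_1 = 2u_0 u$ and $\eta_{1,x} = 2u_0'u + 2u_0 u_x$. Adding $B_1(u)$, the $u_x^2$ coefficients immediately combine into $5u_0^2 - 2$, which matches $B_0(u)$. The only off-diagonal contribution is the cross term $4u_0 u_0' u u_x = 2u_0 u_0'(u^2)_x$; integrating it by parts against $\chi_R$ produces the bulk term $-\int (1-u_0^2)(1-3u_0^2)u^2\chi_R \dd x$, upon using $2(u_0 u_0')' = (1-u_0^2)(1-3u_0^2)$ (a consequence of \eqref{wave} together with $u_0'^2 = \tfrac12(1-u_0^2)^2$), plus a boundary term $-\int 2u_0 u_0'\chi_R' u^2\dd x$. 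A short computation with $s = u_0^2$ then shows that the resulting coefficient of $u^2$ equals $-2 + 9s - 5s^2$, exactly the $u^2$ coefficient of $B_0(u)$. This establishes $\int_\R (B_1(u)+B_3(\eta_1))\chi_R\dd x = \int_\R B_0(u)\chi_R\dd x - \int_\R 2u_0 u_0'\chi_R' u^2\dd x$. The boundary term is harmless: on $\mathrm{supp}\,\chi_R' \subset \{R/2 \le |x| \le 3R/2\}$ one has $|u_0 u_0'| \le Ce^{-\sqrt2|x|}$ and $|\chi_R'| \le C/R$, so with $\|u\|_{L^\infty(-2R,2R)}^2 \le CR\rho^2$ from \eqref{locbound2} it is of order $e^{-cR}\rho^2$ for some $c > 0$, which is what the symbol $e^{-R}\rho^2$ abbreviates in \eqref{Bident}.

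The main obstacle is to estimate the cubic and quartic remainders with the correct power of $R$, namely $R^3$, and here one must be careful. Each such term is an integral over $\mathrm{supp}\,\chi_R \subset [-\tfrac32 R,\tfrac32 R]$ of a product of three or four of the factors $u, v, u_x, v_x$ with coefficients bounded in terms of $u_0, u_0'$. The danger is the tail $R^{3/2}\rho^2$ in the local bound $\|u\|_{L^2(-2R,2R)} \le C(\rho + R^{3/2}\rho^2)$ of \eqref{locbound1}: using it for two factors at once, as in the naive estimate $\int u^3 \le \|u\|_{L^\infty}\|u\|_{L^2(-2R,2R)}^2$, would produce a term of order $R^{7/2}\rho^5$ (or $R^4\rho^6$ in the quartic case) that cannot be absorbed into $R^3\rho^3$. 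The remedy is to use this $L^2$ bound for at most one factor, placing the remaining factors in $L^\infty$ via $\|u\|_{L^\infty(-2R,2R)} + \|v\|_{L^\infty(-2R,2R)} \le CR^{1/2}\rho$ from \eqref{locbound2}, and the derivative factors in the full-line norms $\|u_x\|_{L^2} + \|v_x\|_{L^2} \le \rho$. Concretely, the purely non-derivative cubic products $u^3$ and $uv^2$ arising from $(3u_0^2-2)\eta_1\eta_2$ are bounded as $L^\infty \cdot L^\infty \cdot L^1$, with $\|u\|_{L^1(-2R,2R)} \le CR^{1/2}\|u\|_{L^2(-2R,2R)}$; the cubic products containing a derivative are bounded as $L^\infty \cdot L^2 \cdot L^2$; and in the quartic terms one controls $\int_{|x|\le 3R/2} u^2 \dd x$ by $\|u\|_{L^\infty}^2 \cdot 3R \le CR^2\rho^2$ rather than by \eqref{locbound1}. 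In every case the outcome is a bound of the form $\mathcal{O}(R^3\rho^3 + R^3\rho^4)$; since $d_R(\psi,u_0) \le 1$ forces $\rho \le C_0$ through Lemma~\ref{auxlem}, the surplus factor $\rho$ is bounded and the higher powers collapse into $R^3\rho^3$, yielding \eqref{Bident} uniformly for $R \ge 1$.
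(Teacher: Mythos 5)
Your proof is correct and follows essentially the same route as the paper: the same expansion of $B_3(\eta)$ into its quadratic part $B_3(2u_0u)$ plus cubic/quartic remainders, the same integration by parts of the cross term $4u_0u_0'uu_x$ yielding the identity $B_1(u)+B_3(\eta) = B_0(u) + (2u_0u_0'u^2)_x + \tilde N(u,v)$ and the exponentially small boundary term $-2\int_\R u_0u_0'u^2\chi_R'\dd x$, and remainder bounds based on \eqref{locbound1}, \eqref{locbound2}, and $\rho(u,v,\eta)\le C_0$. The paper dismisses the remainder estimates as ``straightforward to verify''; your bookkeeping (placing at most one factor in $L^2(-2R,2R)$ to avoid the $R^{7/2}\rho^5$ pitfall, and reading $e^{-R}$ as $e^{-cR}$, $c>0$) supplies exactly the details it omits.
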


\begin{proof}
Since $\eta = 2u_0u + u^2 + v^2$, we find by a direct calculation
$$
  B_3(\eta) \,=\, 2u_0'^2u^2 + 2 u_0^2 u_x^2 + 4 u_0 u_0' u u_x +
  2(3u_0^2-2) u_0^2 u^2 + \tilde N(u,v),
$$
where
\begin{align*}
  \tilde N(u,v) \,&=\, 4(uu_x + vv_x)(u_0'u + u_0u_x) + 2(uu_x + vv_x)^2 \\
  &\quad + 4(3u_0^2-2)u_0u(u^2+v^2) + 2(3u_0^2-2)(u^2+v^2)^2.
\end{align*}
In view of the definitions \eqref{Bdef}, this implies that
$$
  B_1(u) + B_3(\eta) \,=\, B_0(u) + (2u_0 u_0' u^2)_x + \tilde N(u,v).
$$
If we now multiply both sides by
$\chi_R(x)$ and integrate over $x \in \R$, we arrive at \eqref{Bident},
because it is straightforward to verify using \eqref{rhodef},
\eqref{Kbound1} and \eqref{locbound2} that
$$
  -2\int_\R u_0 u_0' u^2 \chi_R'(x) \dd x \,=\, \mathcal{O}(e^{-R}
  \rho(u,v,\eta)^2), \quad \hbox{and}\quad \int_\R \tilde N(u,v)
  \chi_R(x) \dd x \,=\, \mathcal{O}(R^3\rho(u,v,\eta)^3).
$$
This concludes the proof of the lemma.
\end{proof}

Using Lemma~\ref{lemma-soliton-4}, we are able to derive the 
desired lower bound on the difference $\Lambda(\psi) - \Lambda(u_0)$ 
in terms of the quantity $\rho(u,v,\eta)$.

\begin{proposition}
\label{prop-soliton}
If $R \ge 1$ is sufficiently large, there exists a constant $C_2 > 0$
such that, if $\psi = u_0 + u + iv$ satisfies $d_R(\psi,u_0) \le 1$
and if $\langle u_0',u\rangle_{L^2} = \langle u_0'',v\rangle_{L^2} = 0$, then
\begin{equation}
\label{Lamlower}
  \Lambda(\psi) - \Lambda(u_0) \,\ge\, C_2 \rho(u,v,\eta)^2 +
  \mathcal{O}(R^3\rho(u,v,\eta)^3),
\end{equation}
where the estimate in the big O term is uniform in $R$.
\end{proposition}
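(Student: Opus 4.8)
The plan is to start from the exact expansion \eqref{Lamexp} of Lemma~\ref{lemma-soliton-3}, which exhibits $\Lambda(\psi)-\Lambda(u_0)$ as the quadratic form $Q(u,v,\eta)$ of \eqref{Qdef} plus the three genuinely higher-order terms $\tfrac12\eta^3+3\eta(u_x^2+v_x^2)+6u_0'(u^2+v^2)u_x$. My first move is to dispose of this remainder. Using the $L^\infty$ controls \eqref{Kbound2} and the localized estimates \eqref{locbound1}--\eqref{locbound2}, in each cubic term one factor is bounded in $L^\infty$ by $\mathcal{O}(R^{1/2}\rho)$ on the relevant region (and by $\mathcal{O}(\rho)$ for $\eta$ in the outer region, thanks to \eqref{Kbound2}) while the remaining quadratic factor is bounded by $\mathcal{O}(\rho^2)$; since $u_0'$ is exponentially localized, the same holds for the last term. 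This yields a bound of order $R^{5/2}\rho^3\subset\mathcal{O}(R^3\rho^3)$ for the whole remainder, so it suffices to prove a matching lower bound for the quadratic part $Q(u,v,\eta)$.

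To analyze $Q$ I split the integral with the partition of unity $1=\chi_R+(1-\chi_R)$ introduced before Lemma~\ref{auxlem2}, writing $Q=\int_\R(B_1(u)+B_3(\eta))\chi_R\dd x+\int_\R B_2(v)\chi_R\dd x+\int_\R(B_1+B_2+B_3)(1-\chi_R)\dd x$. In the first inner piece I invoke Lemma~\ref{lemma-soliton-4}, which replaces $\int(B_1(u)+B_3(\eta))\chi_R$ by $\int B_0(u)\chi_R$ up to an error $\mathcal{O}(R^3\rho^3+e^{-R}\rho^2)$; thus, modulo admissible errors, the inner contribution is governed by the two nonnegative forms $\int B_0(u)\chi_R=\langle K_+u,u\rangle_{L^2}$-type and $\int B_2(v)\chi_R=\langle K_-v,v\rangle_{L^2}$-type. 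I would then prove cutoff analogues of the coercivity estimates \eqref{K+coercive} and \eqref{K-coercive}. For the $u$-part, the condition $\langle u_0',u\rangle_{L^2}=0$ (preserved up to $\mathcal{O}(e^{-cR})$ after multiplication by $\chi_R$, since $u_0'$ decays exponentially) together with Lemma~\ref{lemma-soliton-1} gives $\int B_0(u)\chi_R\ge c\int(u_{xx}^2+u_x^2)\chi_R\dd x+c\int_{|x|\le R}u^2\dd x$, using that $\chi_R\ge\tfrac12$ on $[-R,R]$. For the $v$-part, Lemma~\ref{lemma-soliton-2} and the condition $\langle u_0'',v\rangle_{L^2}=0$ control $\|v_x\|_{H^1}^2$ and $|v(0)|^2$; the weighted term $R^{-2}\int_{|x|\le R}v^2$ appearing in \eqref{rhodef} is then recovered from $|v(0)|^2+\|v_x\|_{L^2}^2$ via the elementary bound $|v(x)|\le|v(0)|+|x|^{1/2}\|v_x\|_{L^2}$, which is precisely the reason the factor $R^{-2}$ is built into $\rho$.

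For the outer piece I keep $\eta$ as an independent variable and exploit that $u_0^2\to1$: choosing $R$ large enough that $3u_0^2-2\ge c_0>0$ on $|x|\ge R/2$, one checks that the coefficient of $u^2$ in $B_1$, namely $(1-u_0^2)(9u_0^2-2)$, is nonnegative there, and likewise that $B_2(v)$ and $B_3(\eta)$ have only favorable signs. Hence each of $B_1(u)$, $B_2(v)$, $B_3(\eta)$ is pointwise nonnegative on the support of $1-\chi_R$, and $\int(B_1+B_2+B_3)(1-\chi_R)\dd x$ bounds from below a constant multiple of $\int_{|x|\ge R}(\eta_x^2+\eta^2)\dd x$ together with the outer parts of the derivative norms. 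Adding this to the two inner bounds reconstitutes the full $\rho^2$: the global norms $\|u_x\|_{L^2}^2,\|v_x\|_{L^2}^2,\|u_{xx}\|_{L^2}^2,\|v_{xx}\|_{L^2}^2$ are covered by combining inner and outer derivative contributions, the inner $\int_{|x|\le R}(u^2+R^{-2}v^2)$ comes from the localized coercivity, and the outer $\int_{|x|\ge R}(\eta_x^2+\eta^2)$ comes from $B_3$.

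Collecting the inner lower bound, the outer lower bound, the $\mathcal{O}(R^3\rho^3+e^{-R}\rho^2)$ error from Lemma~\ref{lemma-soliton-4}, and the $\mathcal{O}(R^3\rho^3)$ bound for the genuinely cubic terms, I obtain $Q(u,v,\eta)\ge C_2\rho^2-Ce^{-R}\rho^2+\mathcal{O}(R^3\rho^3)$; taking $R$ large enough that $Ce^{-R}\le\tfrac12 C_2$ absorbs the $e^{-R}\rho^2$ term into the main term and yields \eqref{Lamlower}. I expect the main obstacle to be the localized coercivity step: one must extend Lemmas~\ref{lemma-soliton-1} and \ref{lemma-soliton-2} to the cutoff-weighted forms $\int B_0(u)\chi_R$ and $\int B_2(v)\chi_R$ with constants that do not degenerate as $R\to\infty$, carefully tracking the commutator terms generated by $\chi_R'$ (supported where $u_0'$ is exponentially small) and the $\mathcal{O}(e^{-cR})$ defect in the orthogonality conditions caused by the cutoff. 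Matching the inner $v$-control, which only sees $v(0)$ and $v_x$, to the weighted $L^2$ norm of $v$ in \eqref{rhodef}, and gluing the inner $(u,v)$-description to the outer $\eta$-description across the transition region $R\le|x|\le\tfrac32 R$, is where the precise weights in $\rho$ and the size of $R$ really matter.
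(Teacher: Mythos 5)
Your plan is sound and coincides in structure with the paper's proof: the same reduction of the cubic remainder in \eqref{Lamexp} to $\mathcal{O}(R^3\rho^3)$, the same partition of unity $1=\chi_R+(1-\chi_R)$, the same use of Lemma~\ref{lemma-soliton-4} to trade $B_1(u)+B_3(\eta)$ for $B_0(u)$ in the inner region, the same outer positivity for $R$ large, and the same absorption of residual quadratic errors by enlarging $R$. Two deviations deserve comment. First, you also localize the $v$-form, splitting $\int_\R B_2(v)\dd x$ into $\chi_R$ and $(1-\chi_R)$ pieces, and you correctly flag the resulting need for a cutoff analogue of Lemma~\ref{lemma-soliton-2} as your main obstacle; but the paper avoids this obstacle altogether. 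Since $B_2$ involves only $v$ (it is $u$ and $\eta$, not $v$, that are entangled by the relation $\eta=2u_0u+u^2+v^2$), the global coercivity bound \eqref{K-coercive} of Remark~\ref{newcond} applies directly to the full integral $\int_\R B_2(v)\dd x$, giving $C\int_\R(v_{xx}^2+v_x^2)\dd x+CR^{-2}\int_{|x|\le R}v^2\dd x$ with no commutators and no perturbed orthogonality condition; only the $(u,\eta)$ part is ever cut off. Your variant can be made to work (the orthogonality defect is $\mathcal{O}(e^{-cR}\rho)$ because $u,v$ grow at most like $(R+|x|)^{1/2}\rho$ away from the inner region, and the commutators for $B_2$ carry exponentially small coefficients $u_0'$ or $1-u_0^2$), but it is extra labor that the paper's asymmetric decomposition is designed precisely to avoid. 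Second, your error bookkeeping on the $u$-side is too optimistic: the commutator produced by localizing $\int_\R B_0(u)\chi_R\dd x$ via $w=u_x+\sqrt{2}\,u_0u$ contains the terms $\sqrt{2}\,u_0u_x^2\chi_R'$ and $\sqrt{2}\,u_0^2u^2\chi_R'$, whose coefficients are \emph{not} exponentially small on the support of $\chi_R'$ (only those carrying $u_0'$ or $u_0''$ are). Using $|\chi_R'|\le C/R$ and \eqref{locbound1}, this commutator is of size $C\rho^2/R+\mathcal{O}(R^2\rho^4)$, so your claimed clean inner bound $\int B_0(u)\chi_R\ge c\int(u_{xx}^2+u_x^2)\chi_R\dd x+c\int_{|x|\le R}u^2\dd x$ and your final tally $Q\ge C_2\rho^2-Ce^{-R}\rho^2+\mathcal{O}(R^3\rho^3)$ must be corrected by an additional $-C\rho^2/R$ term. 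This is not fatal: choosing $R$ large absorbs $C/R$ exactly as it would absorb $Ce^{-R}$, which is how the paper concludes, but the error is polynomially, not exponentially, small in $R$, and one cannot pretend otherwise when fixing constants.
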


\begin{proof}
Proceeding as in the proof of Lemma~\ref{auxlem}, it is easy to
estimate the cubic terms in \eqref{Lamexp} in terms of $\rho(u,v,\eta)$
using, in particular, the uniform bound \eqref{Kbound1} and
the estimate \eqref{locbound2}. We thus find
\begin{equation}
\label{lowbd0}
  \Lambda(\psi) - \Lambda(u_0) \,=\, Q(u,v,\eta)
  + \mathcal{O}(R^3\rho(u,v,\eta)^3),
\end{equation}
where $Q(u,v,\eta)$ is given by \eqref{Bdef} and \eqref{Qdef}.
Then, in the definition \eqref{Qdef}, we split the integral using
the partition of unity $1 = \chi_R + (1-\chi_R)$ and we use
Lemma~\ref{lemma-soliton-4}. This gives
\begin{align}\nonumber
  Q(u,v,\eta) \,&=\,
  \int_\R B_2(v)\dd x + \int_\R B_0(u)\chi_R(x)\dd x \\ \label{lowbd1}
  &\quad + \int_\R \Bigl(B_1(u) + B_3(\eta)\Bigr)(1-\chi_R(x))\dd x
  + \mathcal{O}(R^3\rho(u,v,\eta)^3 + e^{-R}\rho(u,v,\eta)^2).
\end{align}
As $\langle u_0'',v\rangle = 0$, we know from
Lemmas~\ref{operator-K-minus} and \ref{lemma-soliton-2} that
\begin{equation}
\label{lowbd2}
  \int_\R B_2(v)\dd x \,\ge\, C \int_\R (v_{xx}^2 + v_x^2)\dd x
  + \frac{C}{R^2}\int_{|x| \le R} v^2 \dd x,
\end{equation}
where the last term in the right-hand side follows from the bound
$|v(x)| \le |v(0)| + |x|^{1/2}\|v_x\|_{L^2}$, which implies
$$
  \int_{|x| \le R} v^2 \dd x \,\le\, 4R|v(0)|^2 + 2R^2 \int_\R v_x^2 \dd x
  \,\le\, C R^2  \int_\R B_2(v)\dd x.
$$
On the other hand, if $R \ge 1$ is large enough so that
$3u_0^2 - 2 \ge \frac{1}{2}$ for $|x| \ge R$, it is clear from
\eqref{Bdef} that
\begin{equation}
\label{lowbd3}
 \int_\R \Bigl(B_1(u) + B_3(\eta)\Bigr)(1-\chi_R(x))\dd x
 \,\ge\, C \int_{|x| \ge R} (u_{xx}^2 + u_x^2 + \eta_x^2 + \eta^2)\dd x.
\end{equation}

Finally, we estimate from below the term $\int_\R B_0(u)\chi_R(x)\dd
x$ under the orthogonality assumption $\langle u_0',u\rangle_{L^2} =
0$. Arguing as in Lemma~\ref{lemma-K-plus} and Corollary
\ref{lemma-K-minus}, we introduce the auxiliary variable $w = u_x +
\sqrt{2}u_0 u$. After integrating by parts, we obtain the identity
$$
  \int_\R B_0(u) \chi_R(x) \dd x = \int_\R \Bigl(w_x^2 + w^2
  \Bigr)\chi_R(x) \dd x + J_R,
$$
where
$$
 J_R \,=\, \int_\R \Bigl(\sqrt{2}u_0 u_x^2 + 2\sqrt{2}u_0' u u_x +
  (2u_0 u_0'- \sqrt{2}u_0'')u^2 + \sqrt{2}u_0^2 u^2\Bigr)
  \chi_R'(x)\dd x.
$$
Since $\chi_R'(x) = R^{-1} \chi'(x/R)$, we have using the estimate
\eqref{locbound1}
$$
  |J_R| \,\le\, \frac{C}{R} \int_{|x| \le 3R/2} \Bigl(u_x^2 + u^2\Bigr)\dd x
  \,\le\, \frac{C_3 \rho(u,v,\eta)^2}{R} + \mathcal{O}(R^2\rho(u,v,\eta)^4),
$$
where $C_3 > 0$ is independent of $R$. Moreover, proceeding as in
the proof of Lemma~\ref{lemma-soliton-1}, we find
\begin{equation}
\label{lowbdaux}
  \int_{|x| \le R} \Bigl(u_{xx}^2 + u_x^2 + u^2\Bigr)\dd x \,\le\,
  C \int_{|x| \le R} \Bigl(w_x^2 + w^2\Bigr)\dd x +
  \mathcal{O}(e^{-R}\rho(u,v,\eta)^2).
\end{equation}
Indeed, we have the representation $u = A u_0' + W$, where the
function $W$ is defined in \eqref{variation-u} and the constant $A$ is
fixed by the orthogonality condition $\langle u_0',u\rangle_{L^2} =
0$.  The proof of Lemma~\ref{lemma-soliton-1} shows that
$\|W\|_{L^2(|x|\le R)} \le C \|w\|_{L^2(|x|\le R)}$. From the
orthogonality relation
\[
  0 \,=\, \int_{|x|\le R} u_0'(x)\Bigl(A u_0'(x) + W(x)\Bigr)\dd x
  + \int_{|x|\ge R} u_0'(x) u(x)\dd x,
\]
we easily obtain the bound $|A| \le C\|W\|_{L^2(|x|\le R)} + \mathcal{O}(e^{-R}
\rho(u,v,\eta))$. This shows that
$$
  \|u\|_{L^2(|x|\le R)} \,\le\, C\|w\|_{L^2(|x|\le R)} +
  \mathcal{O}(e^{-R}\rho(u,v,\eta)),
$$
and since $u_x = w - \sqrt{2}u_0 u$ we obtain similar estimates for the
derivatives $u_x$ and $u_{xx}$, which altogether give \eqref{lowbdaux}.
Summarizing, we have shown
\begin{align}\nonumber
  \int_\R B_0(u) \chi_R(x) \dd x \,&\ge\, C \int_{|x| \le R} \Bigl(u_{xx}^2
  + u_x^2 + u^2\Bigr)\dd x - \frac{C_3 \rho(u,v,\eta)^2}{R} \\
  \label{lowbd4}
  &\quad\, + \mathcal{O}(R^2\rho(u,v,\eta)^3 + e^{-R}\rho(u,v,\eta)^2),
\end{align}
where in the big O term we replaced $R^2\rho(u,v,\eta)^4$ with
$R^2\rho(u,v,\eta)^3$ using the fact that $\rho(u,v,\eta) \le C_0
d_R(\psi,u_0) \le C_0$ by \eqref{rhoequiv}. Now, combining
\eqref{lowbd0}, \eqref{lowbd1}, \eqref{lowbd2}, \eqref{lowbd3},
\eqref{lowbd4}, and taking $R \ge 1$ sufficiently large, we arrive at
\eqref{Lamlower}.
\end{proof}

\begin{corollary}\label{Lambdafinal}
Fix any $R \ge 1$. There exist $\epsilon_1 \in (0,1)$ and $C_4 \ge 1$ such
that, if $\psi = u_0 + u + iv$ satisfies $d_R(\psi,u_0) \le \epsilon_1$
and if $\langle u_0',u\rangle_{L^2} = \langle u_0'',v\rangle_{L^2} = 0$,
then
\begin{equation}
\label{Lambdaest}
  C_4^{-1}d_R(\psi,u_0)^2 \le \Lambda(\psi) - \Lambda(u_0) \,\le\,
  C_4 d_R(\psi,u_0)^2.
\end{equation}
\end{corollary}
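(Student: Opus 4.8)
The plan is to derive both inequalities in \eqref{Lambdaest} from the coercivity estimate of Proposition~\ref{prop-soliton} and the explicit expansion \eqref{Lamexp}, transporting everything from the auxiliary quantity $\rho(u,v,\eta)$ to the distance $d_R(\psi,u_0)$ by means of the comparison \eqref{rhoequiv} in Lemma~\ref{auxlem}. The key structural observation is that the left-hand side $\Lambda(\psi)-\Lambda(u_0)$ does not depend on $R$ at all, so it is enough to prove \eqref{Lambdaest} for one fixed value $R=R_0$ that is large enough for Proposition~\ref{prop-soliton} to apply, and then to invoke the equivalence of the distances $d_R$ for different $R\ge 1$ noted in Section~\ref{sec:intro} in order to obtain the statement for arbitrary fixed $R$.

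\emph{Lower bound.} First I would fix $R_0\ge 1$ as required by Proposition~\ref{prop-soliton} and pick $\epsilon_1\in(0,1)$ so small that $d_{R_0}(\psi,u_0)\le\epsilon_1$ forces $\rho:=\rho(u,v,\eta)$ to be small; this is legitimate since $\rho\le C_0\,d_{R_0}(\psi,u_0)$ by \eqref{rhoequiv}. Under the orthogonality conditions, Proposition~\ref{prop-soliton} then gives \eqref{Lamlower}, namely $\Lambda(\psi)-\Lambda(u_0)\ge C_2\rho^2+\mathcal{O}(R_0^3\rho^3)$, and after shrinking $\epsilon_1$ so that the cubic remainder is at most $\tfrac12 C_2\rho^2$, I obtain $\Lambda(\psi)-\Lambda(u_0)\ge\tfrac12 C_2\rho^2$. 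Feeding in the reverse comparison $d_{R_0}(\psi,u_0)\le C_0R_0\,\rho$ from \eqref{rhoequiv} yields $\Lambda(\psi)-\Lambda(u_0)\ge\tfrac12 C_2 (C_0R_0)^{-2}\,d_{R_0}(\psi,u_0)^2$, which is the desired lower bound at $R=R_0$.

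\emph{Upper bound.} For the opposite inequality I would estimate each term of \eqref{Lamexp} from above. The terms carrying $u_{xx}^2,v_{xx}^2,u_x^2,v_x^2$ have bounded coefficients and are controlled by $\rho^2$, while the potential terms weighted by $1-u_0^2$ are handled using the exponential decay of $1-u_0^2$ together with the pointwise bound $|u(x)|\le C(\|u\|_{L^2(-R_0,R_0)}+(1+|x|)^{1/2}\|u_x\|_{L^2})$. The only delicate quadratic contributions are $\tfrac12\eta_x^2$ and $\tfrac12(3u_0^2-2)\eta^2$, since $\rho$ controls $\eta_x,\eta$ only on $|x|\ge R_0$; in the inner region $|x|\le R_0$ I would substitute $\eta=2u_0u+u^2+v^2$ and expand, producing terms controlled by $\rho^2$ plus higher-order contributions. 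The cubic and quartic terms of \eqref{Lamexp} are treated exactly as in the proof of Proposition~\ref{prop-soliton}, using the uniform bounds \eqref{Kbound1} and \eqref{locbound2}, and are of size $\mathcal{O}(R_0^3\rho^3)$. Altogether $\Lambda(\psi)-\Lambda(u_0)\le C\rho^2+\mathcal{O}(R_0^3\rho^3)\le C'\rho^2$ once $\rho$ is small, and $\rho\le C_0\,d_{R_0}(\psi,u_0)$ converts this into the upper bound $\Lambda(\psi)-\Lambda(u_0)\le C'C_0^2\,d_{R_0}(\psi,u_0)^2$.

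\emph{Passage to arbitrary $R$ and the main obstacle.} Having established \eqref{Lambdaest} at $R=R_0$, I would conclude by recalling that $\Lambda(\psi)-\Lambda(u_0)$ is independent of $R$: the first two terms of \eqref{distance} do not involve $R$, and the $L^2(-R,R)$ term is comparable to the $L^2(-R_0,R_0)$ term by the same pointwise bound on $u,v$, so $d_R(\psi,u_0)$ and $d_{R_0}(\psi,u_0)$ are equivalent up to a constant depending only on $R$ and $R_0$. The two-sided estimate therefore persists for any fixed $R\ge 1$, with $C_4$ and $\epsilon_1$ depending on $R$. The main obstacle I anticipate is not a single sharp estimate but the uniform bookkeeping of the remainders: the cubic errors carry a factor $R_0^3$, so the lower bound survives only if $\epsilon_1$ is chosen small \emph{depending on} $R_0$ (hence on $R$), and the region-splitting underlying Proposition~\ref{prop-soliton} must be repeated for the $\eta$-dependent quadratic terms in the upper bound, precisely where the coefficient $3u_0^2-2$ changes sign near the origin.
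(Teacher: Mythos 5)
Your proposal is correct and follows essentially the same route as the paper: apply Proposition~\ref{prop-soliton} for a large fixed $R_0$, absorb the cubic remainder by taking $\epsilon_1$ small, convert between $\rho(u,v,\eta)$ and $d_{R_0}(\psi,u_0)$ via \eqref{rhoequiv}, and pass to arbitrary $R\ge 1$ using the equivalence of the distances $d_R$. The only difference is one of detail: you spell out the upper bound (which the paper dismisses as ``a much simpler argument'' requiring no orthogonality conditions), and your treatment of the $\eta$-terms there is consistent with how the paper handles them elsewhere.
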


\begin{proof}
Choose $R \ge 1$ large enough so that the conclusion of
Proposition~\ref{prop-soliton} holds, and $\rho_0 > 0$ small enough
so that $R^3 \rho_0 \ll C_2$, where $C_2$ is as in \eqref{Lamlower}.
Take $\epsilon_1 \le 1$ such that $C_0\epsilon_1 \le \rho_0$, where
$C_0$ is as in \eqref{rhoequiv}. If $\psi = u_0 + u + iv$ satisfies
$d_R(\psi,u_0) \le \epsilon_1$ and $\langle u_0',u\rangle_{L^2} =
\langle u_0'',v\rangle_{L^2} = 0$, it follows from \eqref{rhoequiv}
that the quantity $\rho(u,v,\eta)$ defined in \eqref{rhodef}
satisfies $\rho(u,v,\eta) \le \rho_0$. By
Proposition~\ref{prop-soliton}, we thus have
$$
   \frac12 C_2 \rho(u,v,\eta)^2 \,\le\, \Lambda(\psi) - \Lambda(u_0)
  \le C_2' \rho(u,v,\eta)^2,
$$
where the lower bound follows from \eqref{Lamlower}, and the upper
bound can be established by a much simpler argument (which does
not use any orthogonality condition). Since $\rho(u,v,\eta)$ is
equivalent to $d_R(\psi,u_0)$ by Lemma~\ref{auxlem}, we obtain
\eqref{Lambdaest}. Finally, Corollary~\ref{Lambdafinal} holds
for any $R \ge 1$ because different values of $R$ give equivalent
distances $d_R$ on $X$.
\end{proof}

It is now easy to conclude the proof of Theorem~\ref{theorem-soliton}.
Fix any $R \ge 1$. Given any $\epsilon > 0$, we take
$$
  \delta \,=\, \frac{1}{2C_4}\,\min(2\epsilon,\epsilon_0,\epsilon_1),
$$
where $C_4 \ge 1$ and $\epsilon_1 > 0$ are as in
Corollary~\ref{Lambdafinal} and $\epsilon_0 > 0$ is as in
Lemma~\ref{lemma-xith}. If $\psi_0 \in X$ satisfies $d_R(\psi_0,u_0)
\le \delta$, then $\Lambda(\psi_0) - \Lambda(u_0) \le C_4
\delta^2$ by the upper bound in \eqref{Lambdaest}, which does not
require any orthogonality condition. Since $\Lambda$ is a conserved
quantity, we deduce that the solution $\psi(\cdot,t)$ of the cubic NLS
equation \eqref{nls} with initial data $\psi_0$ satisfies
$\Lambda(\psi(\cdot,t)) - \Lambda(u_0) \le C_4 \delta^2$ for all
$t \in \R$. We claim that, for all $t \in \R$, we have
\begin{equation}\label{inf3}
  \inf_{\xi, \theta \in \R} d_R\Bigl(e^{i \theta} \psi(\cdot + \xi,t),
  u_0\Bigr) \,\le\, 2C_4\delta \le \epsilon_0.
\end{equation}
Indeed, the bound \eqref{inf3} holds for $t = 0$ by assumption. Let
$\mathcal{J} \subset \R$ be the largest time interval containing the
origin such that the bound \eqref{inf3} holds for all $t \in
\mathcal{J}$. As is well-known \cite{Gerard,Zhidkov}, the solutions of
the cubic NLS equation \eqref{nls} with initial data in $X$ depend
continuously on time with respect to the distance $d_R(\psi,u_0)$.
This implies that the left-hand side of the bound \eqref{inf3} is a
continuous function of $t$, so that $\mathcal{J}$ is closed. On the
other hand, if $t \in \mathcal{J}$, then by Lemma~\ref{lemma-xith} we
can find $\xi,\theta \in \R$ such that the function $\tilde \psi(x) =
e^{i(\theta+t)}\psi(x+\xi,t)$ can be decomposed as in \eqref{decomp2}
with $u,v$ satisfying the orthogonality conditions
\eqref{projections2}. Applying Corollary~\ref{Lambdafinal} to $\tilde
\psi$, we deduce that
$$
  C_4^{-1}d_R(\tilde\psi,u_0)^2 \le \Lambda(\tilde\psi) - \Lambda(u_0)
  \,=\, \Lambda(\psi_0) - \Lambda(u_0)  \le C_4 \delta^2,
$$
so that $d_R(\tilde\psi,u_0) \le C_4\delta$. Using again a
continuity argument, we conclude that $\mathcal{J}$ contains
a neighborhood of $t$. Thus $\mathcal{J}$ is open, hence
finally $\mathcal{J} = \R$, so that the bound \eqref{inf3} holds for all
$t \in \R$. Using Lemma~\ref{lemma-xith}, we thus obtain
modulations parameters $\xi(t)$, $\theta(t)$ such that
$$
  d_R\Bigl(e^{i(\theta(t)+t)} \psi(\cdot + \xi(t),t)\,,u_0\Bigr)
  \,\le\, C_4 \delta \le \epsilon, \qquad t \in \R.
$$
Finally, Lemma~\ref{difflem} shows that the functions $\xi : \R \to
\R$ and $\theta : \R \to \R/(2\pi\Z)$ are continuously differentiable
and satisfy the bounds \eqref{bound-time-per}. The proof of
Theorem~\ref{theorem-soliton} is now complete.

\begin{remark}
Instead of introducing the auxiliary variable $\eta$ to cure the
imperfect decomposition \eqref{decomposition2}, it would be
advantageous to find a parametrization of the perturbations
that fully takes into account the geometry of the functional
$\Lambda$, and in particular the degeneracy of $\Lambda''(u_0)$.
Near the constant solution $u_1 \equiv 1$, it is most natural
to write $\psi(x,t) = (1 + r(x,t))e^{i\phi(x,t)}$, where $r$ and $\phi$
are real-valued functions. In that case, the usual energy
function \eqref{energy} allows us to control $r$ in
$H^1(\R)$ and $\phi_x$ in $L^2(\R)$.
In the same spirit, it is tempting to consider perturbations
of the black soliton of the form
\begin{equation}
\label{decomposition3}
  \psi(x,t) \,=\, (u_0(x) + r(x,t))e^{i\phi(x,t)}, \quad x \in \R,
\end{equation}
where $r,\phi$ are again real-valued functions. With this
representation, we find
\begin{equation}
\label{Lambdatry}
  \Lambda(\psi) - \Lambda(u_0) \,=\, \langle K_+r,r\rangle
  + \int_\R \Bigl(u_0^2 \phi_{xx}^2 + \phi_x^2)\Bigr)\dd x
  + \tilde N(r,\phi_x),
\end{equation}
where $\tilde N(r,\phi_x)$ collects the higher order terms.  This
formula is interesting, because it is not difficult to verify that
$\tilde N(r,\phi_x)$ can be controlled by the quadratic terms in
\eqref{Lambdatry} if $r$ is small in $H^2(\R)$ and $\phi_x$ small in
$H^1(\R)$. However, not all perturbations of the black soliton can be
written in the form \eqref{decomposition3} with $r,\phi$ satisfying
such smallness conditions, because $u_0$ vanishes at $x = 0$ in
\eqref{decomposition3}.
\end{remark}

\vspace{0.5cm}
\noindent{\bf Acknowledgement.} D.P. is supported by the
Chaire d'excellence ENSL/UJF.
He thanks members of Institut Fourier, Universit\'e Grenoble for
hospitality and support during his visit (January-June, 2014).

\end{document}